\newcommand{\qform}[1]{{\left\langle{#1}\right\rangle}}
\newcommand{\pform}[1]{{\langle\!\langle{#1}\rangle\!\rangle}}
\newcommand{\binv}{\overline{\rule{2.5mm}{0mm}\rule{0mm}{4pt}}}
\newcommand{\Qc}{(Q,\binv)}
\renewcommand{\H}{\mathbb{H}}
\DeclareMathOperator{\Ad}{Ad}
\DeclareMathOperator{\ad}{ad}
\DeclareMathOperator{\Id}{Id}
\newcommand{\id}{\Id}
\DeclareMathOperator{\disc}{disc}
\DeclareMathOperator{\End}{End}
\DeclareMathOperator{\Sym}{Sym}
\DeclareMathOperator{\Nrp}{Nrp}
\DeclareMathOperator{\Nrd}{Nrd}
\DeclareMathOperator{\Int}{Int}
\DeclareMathOperator{\ind}{ind}
\DeclareMathOperator{\gr}{gr}
\newtheorem{prop}{Proposition}[section]
\newtheorem{lem}[prop]{Lemma}
\newtheorem{thm}[prop]{Theorem}
\newtheorem{cor}[prop]{Corollary}
\title[Hyperbolicity over the function field of a conic]{Algebras with
  involution that become hyperbolic over the function field of a
  conic}
\author{Anne Qu\'eguiner-Mathieu} 
\address{Universit\'e Paris 13 (LAGA)\\
CNRS (UMR 7539)\\
Universit\'e Paris 12 (IUFM)\\
93430 Villetaneuse, France}
\email{queguin@math.univ-paris13.fr}
\urladdr{http://www-math.univ-paris13.fr/{\textasciitilde}queguin/}
 \author{Jean-Pierre Tignol}
\address{D\'epartement de Math\'ematiques\\ 
Universit\'e catholique de Louvain\\
Chemin du cyclotron, 2\\
B1348 Louvain-la-Neuve\\
Belgique}
\email{jean-pierre.tignol@uclouvain.be}
\urladdr{http://wwww.math.ucl.ac.be/membres/tignol}
\thanks{The second author gratefully
  acknowledges the hospitality of Universit\'e Paris~13, where part of
the work leading to this paper was carried out. He was supported in
part by the F.R.S.--FNRS}
\begin{document}

\begin{abstract}
We study central simple algebras with involution of the first kind
that become hyperbolic over the function field of the conic associated
to a given quaternion algebra $Q$. We classify these algebras in
degree~$4$ and give an example of such a division algebra with
orthogonal involution of degree~$8$ that does not contain $\Qc$, even
though it contains $Q$ and is totally decomposable into a tensor
product of quaternion algebras.
\end{abstract}

\maketitle

Given two central simple algebras with involution $(A,\sigma)$ and
$(B,\tau)$ over a field $F$, we say that $(A,\sigma)$ contains
$(B,\tau)$ if $A$ contains a $\sigma$-stable subalgebra isomorphic to
$B$ over which the involution induced by $\sigma$ is conjugate to
$\tau$. By the double centralizer theorem~\cite[(1.5)]{KMRT}, this is
also equivalent to saying that $(A,\sigma)$ is isomorphic to a tensor
product $(A,\sigma)\simeq(B,\tau)\otimes(C,\gamma)$ for some 
central simple algebra with involution $(C,\gamma)$ over $F$. 

Let $\Qc$ be a quaternion division algebra over $F$, endowed with its
canonical involution. We denote by $F_Q$ the function field of the
associated conic, which is the Severi--Brauer variety of $Q$. 
Since $\binv$ is of symplectic type, it becomes hyperbolic over any
field that splits $Q$, hence in particular over $F_Q$. From this, one
may easily deduce that \emph{any $(A,\sigma)$ that contains $\Qc$ becomes
  hyperbolic over $F_Q$.} The main theme of this paper is to
investigate the reverse implication. In the case where $A$ is split
and $\sigma$ is anisotropic, it is an easy consequence of the
Cassels--Pfister subform theorem in the algebraic theory of quadratic
forms that the converse holds, see Proposition~\ref{split.prop}. When
$A$ is not split, the problem is much more delicate, and comparable to
the characterization of quadratic forms that become \emph{isotropic}
over $F_Q$, which was studied by Hoffmann, Lewis, and Van Geel
\cite{Hoff:thesis}, \cite{HLVG}, \cite{HVG}. Using an example from
\cite{HVG} of a $7$-dimensional quadratic form over a suitable field
$F$ that becomes isotropic over $F_Q$, we construct in
\S\ref{totdec.sec} a division algebra $A$ of degree~$8$ with an
orthogonal involution $\sigma$ such that $(A,\sigma)$ becomes
hyperbolic over $F_Q$ but does not contain $\Qc$, even though $A$
contains $Q$ and $(A,\sigma)$ decomposes into a tensor product of
quaternion algebras with involution. This situation does not occur in
lower degrees.

Algebras with involution that become split hyperbolic over $F_Q$ are
considered in \S\ref{split.sec}. It is shown in
Proposition~\ref{split.cor} that their anisotropic kernel contains
$\Qc$ (if it is not trivial). This applies in particular to algebras
of degree~$2m$ with $m$ odd, see Corollary~\ref{split2.cor}. The case
of algebras of degree~$4$ is completely elucidated in
\S\ref{deg4.sec}, using Clifford algebras for orthogonal involutions
and a relative cohomological invariant of degree~$3$ due to
Knus--Lam--Shapiro--Tignol for symplectic involutions. In the
symplectic case, we classify the algebras of degree~$4$ that become
hyperbolic over $F_Q$ but do not contain $\Qc$, see
Theorem~\ref{deg4symp.thm}. We show in \S\ref{min.sec} that our result
is equivalent to the Hoffmann--Lewis--Van Geel classification of
$5$-dimensional quadratic forms that become isotropic over $F_Q$
without containing a Pfister neighbour of the norm form of $Q$, see
Corollary~\ref{qf.cor}. Orthogonal involutions on algebras of
degree~$8$ are considered in \S\S\ref{totdec.sec} and \ref{deg8.sec},
using triality. In \S\ref{totdec.sec} we relate tensor products of
quaternion algebras to quadratic forms of dimension~$8$ with trivial
discriminant. The algebras with involution that do not decompose into
tensor products of quaternion algebras with involution and become
hyperbolic over $F_Q$ are determined in \S\ref{deg8.sec}, see
Theorem~\ref{deg8.thm}. They are isotropic, and their anisotropic
kernel contains $\Qc$. Finally, we use Laurent power series in
\S\ref{large.sec} to construct algebras with involution of large
degree that do not contain $\Qc$.

\section{Notations and preliminary observations} 

We work over a base field $F$ of characteristic different from $2$,
and only consider algebras with involution of the first kind. 
We refer the reader to~\cite{KMRT} and to \cite{Lam} for background
information on central simple algebras with involution and on
quadratic forms. However, we depart from the notation in \cite{Lam} by
using $\pform{a_1,\dots, a_n}$ to denote the $n$-fold Pfister form
$\otimes_{i=1}^n\qform{1,-a_i}$. 

If $h\colon V\times V\to D$ is a regular hermitian or
skew-hermitian form on a finite-dimensional vector space $V$ over a
division algebra $D$, we denote by $\ad_h$ the involution on
$\End_D(V)$ that is adjoint to $h$. In the particular case where $D$
is split and $h$ is the polar form of a quadratic form $q$, we also
denote $\ad_q$ for $\ad_h$. Following Becher \cite{Becher}, for any
$n$-dimensional 
quadratic form $q$ over $F$, we denote by $\Ad_q$ the split orthogonal
algebra with involution $(M_n(F),\ad_q)$.

Recall that a central simple $F$-algebra with involution $(A,\sigma)$
is hyperbolic if and only if it is isomorphic to $(\End_DV,\ad_h)$ for
some hyperbolic hermitian or skew-hermitian form $h$ on a vector space $V$
over a division algebra $D$. In particular, $A$ admits a hyperbolic
involution if and only if the index of $A$ divides $\frac12\deg(A)$. 
If so, then $A$ admits up to conjugation a unique orthogonal hyperbolic
involution, and a unique symplectic one; see~\cite{BST} or~\cite[6.B]{KMRT}.
 
For any central simple $F$-algebra with involution $(A,\sigma)$ and
any field extension $K/F$, we let
$(A,\sigma)_K=(A\otimes_FK,\sigma\otimes\id)$. As pointed out in the
introduction, we shall be mostly interested in the special case where
$K=F_Q$ is the function field of the conic associated to a quaternion
algebra $Q$, i.e.\ its Severi--Brauer variety. Throughout the paper,
we fix the notation
\[
Q=(a,b)_F
\]
 with $a$, $b\in F^\times$, and we assume $Q$ is not split. We may then
identify $F_Q$ with the quadratic extension $F(\sqrt{at^2+b})$ of
$F(t)$, where $t$ is an indeterminate. For any central simple
$F$-algebra $A$, we let $A(t)=A\otimes_FF(t)$ and $A[t]=A\otimes_FF[t]$.

\begin{prop}
  Let $(A,\sigma)$ be a central simple $F$-algebra with
  involution. Assume $\sigma$ is anisotropic. The following conditions
  are equivalent:
  \begin{enumerate}
  \item[(a)]
  $(A,\sigma)_{F_Q}$ is hyperbolic;
  \item[(b)]
  $A(t)$ contains an element $y$ satisfying
  \begin{equation}\label{y.eq} 
    \sigma(y)=-y\qquad\text{and}\qquad y^2=at^2+b. 
  \end{equation}
  \end{enumerate}
  If conditions \emph{(a)} and \emph{(b)} hold, then $A[t]$
  contains an element 
  $y_0$ such that
  \begin{equation}
  \label{y0.eq}
  \sigma(y_0)y_0=-(at^2+b).
  \end{equation}
  Moreover, the following conditions are equivalent:
  \begin{enumerate}
  \item[(a')]
  $(A,\sigma)$ contains $\Qc$;
  \item[(b')]
  $A[t]$ contains an element $y$ satisfying \eqref{y.eq}.
  \end{enumerate}
\end{prop}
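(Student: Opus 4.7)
The plan is to establish all four implications by moving between idempotents in $A \otimes F_Q$ and skew square-roots of $at^2+b$ in $A(t)$ or $A[t]$, using that $F_Q = F(t)(\sqrt{at^2+b})$ is a quadratic extension of $F(t)$. The implication $(b)\Rightarrow(a)$ is immediate: given $y \in A(t)$ satisfying \eqref{y.eq}, set $s := \sqrt{at^2+b}$ and form $e := \tfrac{1}{2}(1 + ys^{-1}) \in A \otimes_F F_Q$; using $y^2 = s^2$ and that $\sigma$ acts trivially on $F_Q$, one checks $e^2 = e$ and $\sigma(e) = 1 - e$, so $(A,\sigma)_{F_Q}$ is hyperbolic.

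For $(a)\Rightarrow(b)$, start from a hyperbolic idempotent $e \in A \otimes F_Q$ and set $f := 2e - 1$, so that $f^2 = 1$ and $\sigma(f) = -f$. Write $f = u + vs$ with $u, v \in A(t)$; the defining equations translate to $\sigma(u) = -u$, $\sigma(v) = -v$, $uv + vu = 0$, and $u^2 + v^2(at^2+b) = 1$. When $v$ is invertible in $A(t)$, set
\[
y := v^{-1}(1 - u).
\]
The anticommutation $uv = -vu$ gives $(1-u)v = v(1+u)$, hence also $y = (1+u)v^{-1}$; from this $\sigma(y) = -y$ follows by direct computation, and
\[
y^2 = v^{-1}(1-u)(1+u)v^{-1} = v^{-1}(1-u^2)v^{-1} = v^{-1}\cdot v^2(at^2+b)\cdot v^{-1} = at^2+b.
\]
The main obstacle is ensuring that $v$ be invertible. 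The case $v = 0$ is ruled out because then $f \in A(t)$ and $(1+f)/2$ is a hyperbolic idempotent in $A(t)$, contradicting the anisotropy of $\sigma$ over $F(t)$ (which follows from that over $F$ by specialization). Treating a nonzero but non-invertible $v$ requires replacing $e$ by a suitable conjugate under the unitary group of $\sigma$ on $A(t)$ in order to move into the open locus where the corresponding $v$ becomes invertible.

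For the existence of $y_0 \in A[t]$ with $\sigma(y_0)y_0 = -(at^2+b)$: write the $y$ just constructed as $y = p/q$ with $p \in A[t]$ and $q \in F[t]$, chosen so that no monic $F[t]$-factor of $q$ divides $p$ in $A[t]$. The relation $\sigma(y)y = -(at^2+b)$ becomes $\sigma(p)p = -(at^2+b) q^2$ in $A[t]$. For every root $\lambda \in F$ of $q$, evaluation at $t = \lambda$ yields $\sigma(p(\lambda)) p(\lambda) = 0$ in $A$, whence $p(\lambda) = 0$ by anisotropy of $\sigma$ and therefore $(t - \lambda) \mid p$ in $A[t]$, contradicting the chosen primitivity. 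So $q$ has no $F$-rational roots; higher-degree irreducible factors of $q$ are handled by the parallel reduction modulo each such factor $r$, invoking anisotropy of the induced involution on $A \otimes_F F[t]/(r)$. This eventually yields a representative $y_0 \in A[t]$ satisfying the desired equation.

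Finally, for $(a')\Leftrightarrow(b')$: if $\Qc \subset (A,\sigma)$ with quaternion generators $i, j$ satisfying $i^2 = a$, $j^2 = b$, $ij = -ji$, both skew under $\sigma$, then $y := ti + j \in A[t]$ satisfies $\sigma(y) = -y$ and $y^2 = at^2 + t(ij + ji) + b = at^2 + b$. Conversely, given $y = \sum_{i=0}^n y_i t^i \in A[t]$ satisfying \eqref{y.eq} with $y_n \neq 0$, each $y_i$ is skew; the coefficient of $t^{2n}$ in $y^2$ is $y_n^2$, which must vanish whenever $n \geq 2$, so $\sigma(y_n)y_n = -y_n^2 = 0$ and anisotropy of $\sigma$ force $y_n = 0$, a contradiction. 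Since $n = 0$ is incompatible with $y^2 = at^2 + b$, we have $n = 1$; matching remaining coefficients yields $y_0^2 = b$, $y_1^2 = a$, $y_0 y_1 + y_1 y_0 = 0$. Hence $F\langle y_0, y_1 \rangle \subset A$ is a copy of $Q$ on which $\sigma$ restricts to $\binv$, exhibiting $\Qc \subset (A,\sigma)$.
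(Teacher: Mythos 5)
Your verifications of $(b)\Rightarrow(a)$ (via the explicit idempotent $e=\tfrac12(1+ys^{-1})$) and of $(a')\Leftrightarrow(b')$ are correct, and the latter is essentially the paper's own argument. The other two steps have real gaps, and in one case the strategy itself cannot work.

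For $(a)\Rightarrow(b)$, you acknowledge that the case of a nonzero but non-invertible $v$ is left open and assert that a unitary conjugate of $e$ will make $v$ invertible. That is exactly the substance of \cite[3.3]{BST}, which the paper invokes at this point; as written your argument does not establish it (one needs, e.g., a genuine density or descent argument), so this direction remains incomplete.

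The argument offered for the existence of $y_0\in A[t]$ cannot be repaired. If it succeeded, you would conclude $q$ is constant, hence that $y$ itself lies in $A[t]$; since your $y$ satisfies~\eqref{y.eq}, that is condition~(b'), and by your own $(b')\Rightarrow(a')$ the algebra $(A,\sigma)$ would contain $\Qc$. Your argument would thus prove $(a)\Rightarrow(a')$, which is false: Theorem~\ref{totdec.thm} exhibits a degree-$8$ division algebra with anisotropic orthogonal involution such that $(A,\sigma)_{F_Q}$ is hyperbolic but $(A,\sigma)$ does not contain $\Qc$. The concrete flaw is the clause ``invoking anisotropy of the induced involution on $A\otimes_F F[t]/(r)$'': anisotropy of $\sigma$ over $F$ does not pass to finite extensions. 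For instance $r=at^2+b$ is irreducible (because $Q$ is not split), $F[t]/(at^2+b)\cong F(\sqrt{-ab})$ splits $Q$, and $\sigma$ may well become isotropic there; in any counterexample to $(a)\Rightarrow(a')$ the minimal denominator must pick up such a factor. The paper avoids this by appealing to the Cassels--Pfister theorem for algebras with involution in \cite{T:96}, which yields a unitary $u\in A(t)$ with $\sigma(u)u=1$ and $uy\in A[t]$; then $y_0=uy$ satisfies $\sigma(y_0)y_0=\sigma(y)y=-(at^2+b)$ but is not required to be skew nor to square to $at^2+b$. That relaxation -- from conditions~\eqref{y.eq} to the weaker~\eqref{y0.eq} -- is precisely what your approach does not allow, and it is essential.
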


\begin{proof}
  The equivalence of (a) and (b) readily follows from the description
  of aniso\-tropic involutions that become hyperbolic over a quadratic
  extension in \cite[3.3]{BST}. If $y\in A(t)$ satisfies~\eqref{y.eq},
  then the version of the Cassels--Pfister
  theorem for algebras with involution in~\cite{T:96} yields an element
  $u\in A(t)$ such that $\sigma(u)u=1$ and $uy\in A[t]$. Then $y_0=uy$
  satisfies~\eqref{y0.eq}. 

  If (a') holds, then $A$ contains two
  skew-symmetric elements $i$, $j$ such that $i^2=a$, $j^2=b$, and
  $ji=-ij$. Then $y=it+j\in A[t]$ satisfies \eqref{y.eq}, so (b')
  holds. Conversely, suppose (b') holds and let $y\in A[t]$ satisfy
  \eqref{y.eq}. We then have $\sigma(y)y=-(at^2+b)$, hence the degree of
  $y$ is $1$ since $\sigma$ is anisotropic. Thus, $y=\lambda t+\mu$
  for some $\lambda$, $\mu\in A$. It follows from \eqref{y.eq} that
  $\lambda$ and $\mu$ are skew-symmetric and satisfy $\lambda^2=a$,
  $\mu^2=b$, and $\mu\lambda=-\lambda\mu$, hence they generate a
  $\sigma$-stable subalgebra of $(A,\sigma)$ isomorphic to $\Qc$.
\end{proof}

\begin{prop}
  \label{division.prop}
  Let $(A,\sigma)$ be a central simple $F$-algebra with
  involution. Assume $A$ is division. If $(A,\sigma)_{F_Q}$ is
  hyperbolic, then $A$ contains $Q$ and $A[t]$ contains an element
  $y_1$ such that $y_1^2=at^2+b$.
\end{prop}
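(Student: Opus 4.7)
The plan is to first establish that $A$ contains a copy of $Q$ as a subalgebra, and then to produce $y_1$ by the obvious construction from generators of $Q$. Since $A$ is division, $\sigma$ is anisotropic and the previous proposition applies. Moreover, hyperbolicity of $(A,\sigma)_{F_Q}$ means $A_{F_Q}$ carries a hyperbolic involution of the first kind, which forces $\ind(A_{F_Q})$ to divide $\deg(A)/2$; since $A$ is division, $\ind A = \deg A$, so the index strictly decreases upon passage to $F_Q$.

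Next, I would invoke the classical index reduction formula for the function field of a conic: for any central simple $F$-algebra $B$ and non-split quaternion $Q$,
\[
\ind(B_{F_Q}) = \min\bigl(\ind B,\ \ind(B\otimes_F Q)\bigr).
\]
Applied to $B=A$, this yields $\ind(A\otimes Q)\leq \deg(A)/2$. Since all indices involved are powers of~$2$ (as $A$ is division with involution of the first kind), one can pick a central simple $F$-algebra $D$ of degree exactly $\deg(A)/2$ in the Brauer class of $A\otimes Q$. Because $[Q]$ has order $2$ in the Brauer group, $[A]=[D\otimes Q]$; since $\deg(D\otimes Q)=\deg A$ and $A$ is division, $D\otimes Q$ must itself be division, and therefore $A\cong D\otimes Q$. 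In particular $Q\subset A$. Picking generators $i,j\in Q\subset A$ with $i^2=a$, $j^2=b$, $ij=-ji$, one verifies immediately that $y_1 := it+j\in A[t]$ satisfies $y_1^2 = i^2 t^2 + (ij+ji)t + j^2 = at^2+b$.

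The main obstacle is the invocation of the index reduction formula, a nontrivial input from the theory of central simple algebras over function fields of Severi--Brauer varieties---equivalent to the theorem (due to Merkurjev, building on Witt) that the kernel of $\mathrm{Br}(F)\to\mathrm{Br}(F_Q)$ is generated by $[Q]$. A more hands-on approach starting from the element $y_0=\alpha t+\beta\in A[t]$ produced by Proposition~1.1, which gives the relations $\sigma(\alpha)\alpha=-a$, $\sigma(\beta)\beta=-b$, and $\sigma(\alpha)\beta+\sigma(\beta)\alpha=0$, would require extracting an anticommuting pair of square roots of $a$ and $b$ directly from $\alpha$ and $\beta$; this does not seem straightforward, so routing the argument through the Brauer group is preferable.
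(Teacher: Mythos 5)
Your proof is correct and follows essentially the same route as the paper: both arguments hinge on Merkurjev's index reduction for the function field of the conic, which the paper cites directly as ``$A_{F_Q}$ is not division, hence $A$ contains $Q$,'' while you unpack the same conclusion via the index reduction formula, the 2-primary nature of the indices, and the Brauer-group identity $[A]=[D\otimes Q]$. (Incidentally, your $y_1=it+j$ is what the paper means; the printed $y_1=it+b$ is a typo.)
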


\begin{proof}
  Since $(A,\sigma)_{F_Q}$ is hyperbolic, the algebra $A_{F_Q}$ is not
  division, hence Merkurjev's index reduction theorem \cite[Th.~1]{Mer:92}
  shows that $A$ contains $Q$, hence also two elements $i$, $j$ such
  that $i^2=a$, $j^2=b$, and $ji=-ij$. Then $y_1=it+b$ satisfies
  $y_1^2=at^2+b$. 
\end{proof}

Thus, the algebra with involution $(A,\sigma)$ in
Theorem~\ref{totdec.thm} below is such 
that $A(t)$ contains an element $y$ satisfying \eqref{y.eq}, and
$A[t]$ contains elements $y_0$, $y_1$ satisfying 
\[
\sigma(y_0)y_0=-(at^2+b),\qquad y_1^2=at^2+b,
\]
but no element satisfying both equations.
\medbreak
\par
For the rest of this section, we focus on the case where $\sigma$ is
orthogonal. We then have a discriminant $\disc\sigma\in
F^{\times}/F^{\times2}$ and a Clifford algebra $C(A,\sigma)$, see
\cite[\S\S7, 8]{KMRT}. Recall from \cite[(8.25)]{KMRT} that the center
of $C(A,\sigma)$ 
is the quadratic \'etale $F$-algebra obtained by adjoining a square root of 
$\disc\sigma$. Therefore, if $\disc\sigma=1$ the algebra $C(A,\sigma)$
decomposes into a direct product of two components, which are central
simple $F$-algebras,
\[
C(A,\sigma)\simeq C_+(A,\sigma)\times C_-(A,\sigma).
\]
If $\deg A\equiv0\bmod4$, then the canonical involution
$\underline{\sigma}$ on $C(A,\sigma)$ restricts to involutions
$\sigma_+$ and $\sigma_-$ on $C_+(A,\sigma)$ and
$C_-(A,\sigma)$. Moreover, the tensor product
$C_+(A,\sigma)\otimes_FC_-(A,\sigma)$ is Brauer-equivalent to $A$, see
\cite[(8.12), (9.14)]{KMRT}.

\begin{prop}
  \label{orthog.prop}
  Let $(A,\sigma)$ be a central simple $F$-algebra with orthogonal
  involution and $\deg A\equiv0\bmod4$. If $(A,\sigma)_{F_Q}$ is
  hyperbolic, then $\disc\sigma=1$ and at least one of
  $(C_+(A,\sigma),\sigma_+)$, $(C_-(A,\sigma),\sigma_-)$ is split and
  isotropic. 
\end{prop}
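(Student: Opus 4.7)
The plan is to prove the two assertions separately and to treat the descent from $F_Q$ to $F$ as the genuine obstacle. For the discriminant, I first note that hyperbolicity of $(A,\sigma)_{F_Q}$ in even degree forces $\disc(\sigma_{F_Q})=1$, so $\disc\sigma$ lies in the kernel of the natural map $F^\times/F^{\times2}\to F_Q^\times/F_Q^{\times2}$. I would show this kernel is trivial by appealing to Arason's description of $\ker\bigl(W(F)\to W(F_Q)\bigr)=\pform{a,b}\cdot W(F)$: the anisotropic elements of this ideal all have dimension divisible by~$4$, so any binary form lying in this kernel is already hyperbolic over $F$, forcing $\disc\sigma=1$ and validating the decomposition $C(A,\sigma)=C_+(A,\sigma)\times C_-(A,\sigma)$.

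For the Clifford claim I would work over $F_Q$. Because the Clifford construction commutes with scalar extension, $\bigl(C_\pm(A,\sigma)\bigr)_{F_Q}\simeq C_\pm\bigl((A,\sigma)_{F_Q}\bigr)$ together with the induced involutions. The key input is the classical characterization of hyperbolicity for orthogonal involutions of degree~$2m$ with $m$ even and trivial discriminant in terms of the Clifford components (see \cite[Ch.~II, \S8]{KMRT} and the references there): such an involution is hyperbolic if and only if at least one of $(C_+,\sigma_+)$, $(C_-,\sigma_-)$ is hyperbolic. Applied over $F_Q$, this produces an index $\epsilon\in\{+,-\}$ for which $(C_\epsilon,\sigma_\epsilon)_{F_Q}$ is hyperbolic, so in particular $(C_\epsilon)_{F_Q}$ is split and $(\sigma_\epsilon)_{F_Q}$ is isotropic over $F_Q$.

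The delicate step, which I regard as the main obstacle, is descending both the splitting and the isotropy to~$F$. By Merkurjev's index reduction theorem, $\ker\bigl(\mathrm{Br}(F)\to\mathrm{Br}(F_Q)\bigr)=\{0,[Q]\}$, so $[C_\epsilon]\in\{0,[Q]\}$. When $[C_\epsilon]=0$, the component $C_\epsilon$ is already split over $F$, and $(C_\epsilon,\sigma_\epsilon)\simeq\Ad_q$ for some quadratic form $q$ over $F$ with $q_{F_Q}$ hyperbolic; Proposition~\ref{split.prop} then forces the anisotropic kernel of $(C_\epsilon,\sigma_\epsilon)$ to contain $\Qc$, from which one can extract isotropy of $q$ over $F$ using the dimensional constraints imposed by membership in $\pform{a,b}\cdot W(F)$. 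When $[C_\epsilon]=[Q]$ the picture is harder: I would switch focus to the complementary component $(C_{-\epsilon},\sigma_{-\epsilon})$ via the Brauer relation $[C_+]+[C_-]=[A]$, and argue that the combined structural constraints force one of the two pairs to become split and isotropic already over $F$. This last case is where I expect the proof to require the most care, since neither splitting in the Brauer group nor isotropy of an involution descends from $F_Q$ to $F$ without additional input.
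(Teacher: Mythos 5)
Your argument up to the point where you conclude that some $(C_\epsilon,\sigma_\epsilon)_{F_Q}$ is split and isotropic over $F_Q$ is on the right track and roughly parallels the paper's proof (which invokes~\cite[(8.31)]{KMRT} to get a split component of the Clifford algebra of a hyperbolic involution, and then Garibaldi's main theorem in~\cite{G:clif} for the isotropy). For the discriminant your route via Arason's computation $\ker\bigl(W(F)\to W(F_Q)\bigr)=\pform{a,b}\cdot W(F)$ does work, but it is heavier than necessary: the paper simply observes that $F$ is quadratically closed in $F_Q$ (a function field of a geometrically integral $F$-variety), so a square class that dies in $F_Q$ is already trivial in $F$.

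The genuine problem is your ``delicate step.'' You read the conclusion of the proposition as asserting that one of $(C_\pm(A,\sigma),\sigma_\pm)$ is split and isotropic \emph{over $F$}, and you set out to descend splitting and isotropy from $F_Q$ to $F$. This descent is not what the proposition claims, and it is in fact false in general. The intended (and proved) conclusion is that one of $(C_\pm(A,\sigma),\sigma_\pm)$ becomes split and isotropic \emph{over $F_Q$}; this is exactly how the proposition is used later, e.g.\ in the proof of Theorem~\ref{deg8.thm}, where the relevant component $C_+(A,\sigma)$ is explicitly Brauer-equivalent to $Q$ (hence not split over $F$) and only $(C_+(A,\sigma),\sigma_+)_{F_Q}$ is shown to be isotropic. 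Indeed, in Case~2 of the dichotomy that follows the proposition one has $\{[C_+(A,\sigma)],[C_-(A,\sigma)]\}=\{[Q],[A\otimes_F Q]\}$, so when $A$ is not Brauer-equivalent to $Q$ \emph{neither} Clifford component is split over $F$. Your fallback in the case $[C_\epsilon]=[Q]$ --- passing to the other component and hoping ``combined structural constraints'' force splitting over $F$ --- therefore cannot succeed. Once you interpret the conclusion over $F_Q$, there is nothing left to descend, and the proof is essentially complete after your second paragraph.

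A secondary, smaller point: you phrase your key input as ``$(A,\sigma)$ hyperbolic iff one of $(C_\pm,\sigma_\pm)$ is hyperbolic'' and then deduce that $(C_\epsilon)_{F_Q}$ is split. Hyperbolicity of $\sigma_\epsilon$ alone does not force $C_\epsilon$ to be split (it only bounds the index); the splitting of one component is a separate statement, namely~\cite[(8.31)]{KMRT}, and the isotropy (not hyperbolicity) of the induced involution on that split component is what Garibaldi's theorem delivers. As written your lemma conflates these, and overstates what is needed.
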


\begin{proof}
  Since $\sigma_{F_Q}$ is hyperbolic, $\disc\sigma$ is a square in
  $F_Q$, hence also in $F$ since $F$ is quadratically closed in
  $F_Q$. By \cite[(8.31)]{KMRT}, the Clifford algebra of any hyperbolic
  involution has a split component; therefore
  $C_+(A,\sigma)_{F_Q}$ or $C_-(A,\sigma)$ is split. The corresponding
  involution $\sigma_\pm$ is isotropic 
  over $F_Q$ by the main theorem in \cite{G:clif}.
\end{proof}

Since the Brauer group kernel of the scalar extension map from $F$ to
$F_Q$ is $\{0,[Q]\}$, the description of algebras $(A,\sigma)$ with
$\sigma$ orthogonal and $\deg A\equiv0\bmod4$ such that
$(A,\sigma)_{F_Q}$ is hyperbolic falls into two cases:
\begin{description}
\item[Case 1] $\bigl\{[C_+(A,\sigma)],[C_-(A,\sigma)]\bigr\}=
\bigl\{0,[A]\bigr\}$;
\item[Case 2]
  $\bigl\{[C_+(A,\sigma)],[C_-(A,\sigma)]\bigr\}=
\bigl\{[Q],[A\otimes_FQ]\bigr\}$.  
\end{description}
Following Garibaldi's definition in \cite{G:16}, case~1 is when
$(A,\sigma)\in I^3$.
Note that these two cases are not exclusive: they intersect if and
only if $A$ is Brauer-equivalent to $Q$; then
$(A,\sigma)$ becomes split hyperbolic over $F_Q$. This case is easily
dealt with in \S\ref{split.sec}. If $\deg A=4$, the first case arises
if and only if $(A,\sigma)$ is hyperbolic, and the second case if and
only if $(A,\sigma)$ contains $\Qc$, see \S\ref{deg4orth.sec}. For
algebras of degree~$8$, case~1 is when $(A,\sigma)$ is totally
decomposable, see~\S\ref{totdec.sec}; case~2 is considered
in~\S\ref{deg8.sec}. 

\section{Algebras with involution that become split hyperbolic over
  $F_Q$}
\label{split.sec}

Since $F_Q$ is the function field of the Severi--Brauer variety of
$Q$, the Brauer group kernel of the scalar extension from $F$ to $F_Q$
is $\{0,[Q]\}$. Therefore, if $(A,\sigma)$ is a central simple
$F$-algebra with involution that becomes split hyperbolic over $F_Q$,
then either $A$ is split or $A$ is Brauer-equivalent to $Q$. We
consider each case separately.

\begin{prop}
  \label{split.prop}
  Let $(A,\sigma)$ be a split central simple $F$-algebra with
  involution.
  \begin{enumerate}
  \item[(1)]
  If $\sigma$ is symplectic, then it is hyperbolic. In this case,
  $(A,\sigma)$ contains $\Qc$ if and only if $\deg A\equiv0\bmod4$.
  \item[(2)]
  If $\sigma$ is orthogonal, then $(A,\sigma)\simeq\Ad_q$ for some
  quadratic form $q$. We have $(\Ad_q)_{F_Q}$ hyperbolic if and only
  if the anisotropic kernel of $q$ is a multiple of the norm form
  $n_Q$. Assuming this condition holds, $\Ad_q$ contains $\Qc$ if and
  only if the Witt index of $q$ is a multiple of $4$.
  \end{enumerate}
\end{prop}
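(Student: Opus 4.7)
My plan is to handle the two parts separately. Part~(1) is a short remark; part~(2) reduces to standard quadratic- and hermitian-form techniques, with the main input being an application of Arason's Witt-kernel theorem.

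For part~(1), any symplectic involution on the split algebra $M_n(F)$ is adjoint to a nondegenerate alternating bilinear form on $F^n$, and in characteristic different from $2$ such a form is automatically hyperbolic, giving the first assertion. For the containment criterion, a decomposition $(A,\sigma)\simeq\Qc\otimes(C,\gamma)$ on split $A$ forces $C$ to have Brauer class $[Q]$, hence $C\simeq M_m(Q)$ and $\deg A=4m$. Conversely, for $\deg A=4m$ I would exhibit the required decomposition by taking $(C,\gamma)=(M_m(Q),\ad_h)$ for a nondegenerate skew-hermitian form $h$ over $(Q,\binv)$, so that $\ad_h$ is orthogonal (since $\binv$ is symplectic); the resulting symplectic involution on the split algebra of degree $4m$ is hyperbolic, hence conjugate to $\sigma$ by the uniqueness recalled in Section~1.

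For part~(2), writing $(A,\sigma)\simeq\Ad_q$, the hyperbolicity of $(\Ad_q)_{F_Q}$ is equivalent to that of $q_{F_Q}$. Since the conic of $Q$ is a Pfister neighbour of the norm form $n_Q=\pform{a,b}$, Arason's theorem identifies $\ker\bigl(W(F)\to W(F_Q)\bigr)$ with $n_Q\cdot W(F)$; combined with the classical fact that any anisotropic form whose Witt class lies in $\pi\cdot W(F)$ for a Pfister form $\pi$ is isometric to $\phi\otimes\pi$ for some $\phi$, this gives the first equivalence: $q_{F_Q}$ is hyperbolic if and only if $q_{an}$ is isometric to a multiple of $n_Q$.

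Assuming this, I pass to the standard dictionary between hermitian forms over $(Q,\binv)$ and $F$-quadratic forms divisible by $n_Q$. A containment $\Qc\subset\Ad_q$ amounts to a factorisation $\Ad_q\simeq\Qc\otimes(\End_QV,\ad_h)$ for some hermitian $h$ (hermitian because $\ad_h$ must be symplectic for the product to be orthogonal); diagonalising $h=\qform{\alpha_1,\dots,\alpha_m}$ identifies $q$ with $q_h=\qform{\alpha_1,\dots,\alpha_m}\otimes n_Q$, and one checks that $h$ is anisotropic if and only if $q_h$ is. Since the hyperbolic hermitian plane $H_0$ over $(Q,\binv)$ satisfies $q_{H_0}\simeq\H\otimes n_Q\simeq 4\H$, the hermitian Witt decomposition $h=h_{an}\perp k\cdot H_0$ yields the quadratic Witt decomposition $q=q_{h_{an}}\perp 4k\cdot\H$, so the Witt index of $q$ is $4k$. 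Conversely, if the Witt index is $4k$ and $q_{an}\simeq\rho_0\otimes n_Q$, then setting $\rho=\rho_0\perp k\H$ gives $q\simeq\rho\otimes n_Q$, hence $\Qc\subset\Ad_q$. The only genuinely nontrivial step is the $\Longrightarrow$ direction of the first equivalence in part~(2), which requires both Arason's Witt-kernel theorem and the Pfister divisibility principle; the rest is a routine bookkeeping exercise with the hermitian/quadratic dictionary.
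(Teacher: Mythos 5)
Your proof is correct and follows essentially the same route as the paper's: part (1) uses the uniqueness of the hyperbolic symplectic involution up to conjugation and the Brauer-class argument for the centralizer, and part (2) reduces the first equivalence to the Witt-kernel/Pfister-divisibility result (the paper cites Lam X(4.11), which packages both of your steps) and then passes through the hermitian-over-$(Q,\binv)$ dictionary to match Witt indices, exactly as the paper does with its $h_1$ and $q_1$.
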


\begin{proof}
  (1) Since every alternating form on an $F$-vector space is
  hyperbolic, it follows that every split algebra with hyperbolic
  involution is hyperbolic. If $\rho$ is an orthogonal involution on
  $Q$, then $\Qc\otimes(Q,\rho)$ is a split algebra of degree~$4$ with
  symplectic involution. If $\deg A=4m$, we have
  \[
  (A,\sigma)\simeq\Qc\otimes(Q,\rho)\otimes\Ad_q
  \]
  for any quadratic form $q$ of dimension $m$ since all the hyperbolic
  involutions on $A$ are conjugate. Conversely, if $A$ contains $Q$,
  then the centralizer of $Q$ in $A$ is Brauer-equivalent to $Q$,
  hence of even degree. Therefore, $\deg A\equiv0\bmod4$.

  (2) By definition, $(\Ad_q)_{F_Q}$ is hyperbolic if and only if
  $q_{F_Q}$ is hyperbolic, hence the first statement follows from
  \cite[X(4.11)]{Lam}. Let $n_Q\otimes q_0$ be the anisotropic kernel
  of $q$. If the Witt index of $q$ is $4m$, then denoting by $\H$ the
  hyperbolic plane over $F$ we have
  \[
  q\simeq n_Q\otimes(q_0\perp m\H).
  \]
  Since $\Ad_{n_Q}\simeq\Qc\otimes\Qc$ (see \cite[(11.1)]{KMRT}) it
  follows that 
  \[
  \Ad_q\simeq\Qc\otimes\Qc\otimes\Ad_{q_0\perp m\H},
  \]
  so $\Ad_q$ contains $\Qc$. Conversely, assume $\Ad_q$ contains $\Qc$
  and let $(A_1,\sigma_1)$ be the centralizer of $\Qc$ in
  $(A,\sigma)$. Then
  \[
  \Ad_q\simeq\Qc\otimes(A_1,\sigma_1)
  \]
  hence $\sigma_1$ is symplectic and $A_1$ is Brauer-equivalent to
  $Q$. Therefore, there is a hermitian $Q$-form $h_1$ such that
  $\sigma_1\simeq\ad_{h_1}$. The anisotropic kernel of $h_1$ has a
  diagonalization $\qform{a_1,\ldots,a_r}_Q$ with $a_1$, \ldots,
  $a_r\in F^\times$; letting $m$ be the Witt index of $h_1$, we have
  \[
  h_1\simeq\qform{a_1,\ldots,a_r}_Q\perp m\H_Q.
  \]
  Consider the following quadratic form over $F$:
  \[
  q_1=\qform{a_1,\ldots,a_r}\perp m\H.
  \]
  Then $(A_1,\sigma_1)\simeq\Qc\otimes\Ad_{q_1}$, hence
  $\Ad_q\simeq\Ad_{n_Q\otimes q_1}$, and therefore $q$ is
  isometric to $n_Q\otimes q_1$ up to a scalar factor. Now,
  \[
  n_Q\otimes q_1=n_Q\otimes\qform{a_1,\ldots,a_r}\perp m(n_Q\otimes\H)
  = n_Q\otimes\qform{a_1,\ldots,a_r}\perp4m\H,
  \]
  and $n_Q\otimes\qform{a_1,\ldots, a_r}$ is anisotropic since
  $\qform{a_1,\ldots, a_r}_Q$ is anisotropic (see \cite[Ch.~10,
  Th.~1.1]{Scha}). Therefore, the Witt index of $q$ is $4m$.
\end{proof}

For algebras that are Brauer-equivalent to $Q$, the result is as
follows:

\begin{prop}
  Let $(A,\sigma)$ be a central simple $F$-algebra with involution
  such that $A$ is Brauer-equivalent to $Q$.
  \begin{enumerate}
  \item[(1)]
  If $\sigma$ is symplectic, then $(A,\sigma)$ contains $\Qc$ and
  $(A,\sigma)_{F_Q}$ is hyperbolic.
  \item[(2)]
  If $\sigma$ is orthogonal and $(A,\sigma)_{F_Q}$ is hyperbolic, then
  $(A,\sigma)$ is hyperbolic and contains $\Qc$.
  \end{enumerate}
\end{prop}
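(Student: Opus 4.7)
The plan is to realize $A \cong \End_Q V$ for a right $Q$-module $V$, so that $\sigma = \ad_h$ for a sesquilinear form $h$; since $\binv$ is symplectic, $h$ is hermitian when $\sigma$ is symplectic and skew-hermitian when $\sigma$ is orthogonal. For case~(1), I would diagonalize $h \cong \qform{a_1, \dots, a_n}_Q$ with $a_i \in F^\times$, fix a diagonalizing basis $v_1, \dots, v_n$ of $V$, and consider the $F$-algebra embedding $L \colon Q \to A$ given by $L_q(v_i \alpha) = v_i q \alpha$ for $q, \alpha \in Q$. Since each $a_i \in F$ commutes with every $q \in Q$, a short computation yields $\ad_h(L_q) = L_{\bar q}$, so $L(Q)$ is a $\sigma$-stable $F$-subalgebra of $A$ on which $\sigma$ restricts to $\binv$; hence $(A, \sigma)$ contains $\Qc$. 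Since $\Qc_{F_Q}$ is a split algebra with symplectic involution, and therefore hyperbolic by Proposition~\ref{split.prop}(1), this containment forces $(A, \sigma)_{F_Q}$ to be hyperbolic, as observed in the introduction.

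For case~(2) the main assertion is that $\sigma$ is hyperbolic over $F$. Granting this, the containment of $\Qc$ is automatic: hyperbolic orthogonal involutions on $A \cong M_n(Q)$ require $n$ to be even and are unique up to conjugation, and in the standard model where $V = Q^n$ carries the hyperbolic skew-hermitian form with Gram matrix $\left(\begin{smallmatrix} 0 & I \\ -I & 0\end{smallmatrix}\right)$ a direct calculation shows that the scalar embedding $Q \hookrightarrow M_n(Q)$ by left multiplication is $\sigma$-stable with $\sigma|_Q = \binv$. To establish the hyperbolicity of $\sigma$, I would first invoke Proposition~\ref{orthog.prop}: when $\deg A \equiv 0 \bmod 4$, it forces $\disc \sigma = 1$ and makes one of $C_+(A, \sigma)$, $C_-(A, \sigma)$ split; from $[C_+] + [C_-] = [A] = [Q]$ the other component is Brauer-equivalent to $Q$, placing $(A, \sigma)$ simultaneously in Case~1 and Case~2 of the dichotomy from the end of the previous section. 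For $\deg A = 4$ this already suffices, as noted in the excerpt.

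For larger degrees and the remaining parity $\deg A \equiv 2 \bmod 4$, I would argue by contradiction using the anisotropic kernel. If $\sigma$ is not hyperbolic, its anisotropic kernel $(A', \sigma')$ remains orthogonal with $A' \sim Q$ and $\sigma'_{F_Q}$ still hyperbolic. The first Proposition of Section~1 applied to $\sigma'$ produces $y \in A'(t)$ with $\sigma'(y) = -y$ and $y^2 = at^2 + b$, while the Brauer-equivalence $A' \sim Q$ supplies an embedding $Q \hookrightarrow A'$ and hence an element $y_1 = it + j \in A'[t]$ satisfying $y_1^2 = at^2 + b$. Both $y$ and $y_1$ generate the subfield $F(t)[\sqrt{at^2+b}]$ of $A'(t)$, so by Skolem--Noether they are conjugate: $y = u y_1 u^{-1}$ for some $u \in A'(t)^\times$. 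The main obstacle is to convert this conjugation relation, together with $\sigma'(y) = -y$, into a constraint on $\sigma'$ over~$F$ itself: combining it with the hermitian Cassels--Pfister theorem of~\cite{T:96} (already invoked in the proof of the first Proposition of Section~1) should allow the descent from $A'(t)$ to $A'$ and yield a nonzero $\sigma'$-isotropic element, contradicting the anisotropy.
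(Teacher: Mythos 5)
Part~(1) of your proposal is essentially the paper's argument. The paper refers back to the observation in the proof of Proposition~\ref{split.prop}(2) that a symplectic involution Brauer-equivalent to $Q$ is adjoint to a hermitian $Q$-form, diagonalizes over $F$, and hence yields a $\sigma$-stable copy of $\Qc$; your explicit computation with the scalar embedding $L\colon Q\to\End_QV$ amounts to the same thing.

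Part~(2) has a genuine gap at its crux. The paper proves that $\sigma$ is hyperbolic by citing the injectivity of the scalar extension map $W^-\Qc\to W(F_Q)$, a deep hermitian analogue of Springer's theorem established independently by Dejaiffe \cite{Dej:01} and Parimala--Sridharan--Suresh \cite{PSS}; that is the entire content of the statement. Your sketch amounts to an attempt to reprove this theorem by elementary means, and the final step does not close. Concretely: the Cassels--Pfister argument of \cite{T:96}, applied to the skew element $y\in A'(t)$ with $\sigma'(y)=-y$ and $y^2=at^2+b$, only produces $y_0\in A'[t]$ with $\sigma'(y_0)y_0=-(at^2+b)$ (the conclusion of the first proposition of \S1), not an element of $A'[t]$ satisfying \emph{both} $\sigma'(y)=-y$ and $y^2=at^2+b$. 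This gives no contradiction with anisotropy: writing $y_0=\lambda t+\mu$ yields $\sigma'(\lambda)\lambda=-a$, $\sigma'(\mu)\mu=-b$, and $\sigma'(\lambda)\mu+\sigma'(\mu)\lambda=0$, which is perfectly consistent with $\sigma'$ being anisotropic. Indeed the paper explicitly points out, in the remark following Proposition~\ref{division.prop}, that the algebra of Theorem~\ref{totdec.thm} is anisotropic, becomes hyperbolic over $F_Q$, and has elements $y_0$, $y_1$ as above yet contains no element satisfying both equations simultaneously. The Skolem--Noether conjugation $y=uy_1u^{-1}$ likewise produces no descent: $y_1$ is not $\sigma'$-skew, and the symmetric element $s=\sigma'(u)u$ you would obtain from comparing $\sigma'(y)=-y$ with the conjugation relation encodes no usable isotropy. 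In short, the phrase ``should allow the descent from $A'(t)$ to $A'$ and yield a nonzero $\sigma'$-isotropic element'' is exactly where a nontrivial theorem is needed and no argument is given. (A smaller issue: your degree-$4$ reduction appeals to \S\ref{deg4orth.sec}, which appears later in the paper and itself relies on material from \S\ref{split.sec}; but that is an organizational matter, not a mathematical one.)
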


\begin{proof}
  (1) It was already observed in the proof of
  Proposition~\ref{split.prop}(2) that every central simple algebra
  with symplectic involution that is Brauer-equivalent to $Q$ contains
  $\Qc$. Therefore, every such algebra becomes hyperbolic over $F_Q$.

  (2) The first statement follows from the injectivity of the scalar
  extension map $W^-\Qc\to W(F_Q)$ proved independently in \cite{Dej:01}
  and \cite{PSS}. If $(A,\sigma)$ is hyperbolic, then $A=M_r(F)\otimes
  Q$ for some even integer $r$, and $\sigma$ is conjugate to
  $\tau\otimes\binv$ for any symplectic (hyperbolic) involution $\tau$
  on $M_r(F)$. In particular, $(A,\sigma)$ contains $\Qc$.
\end{proof}

Focusing on the anisotropic case in the propositions above, we have:

\begin{cor}
  \label{split.cor}
  Let $(A,\sigma)$ be a central simple $F$-algebra with
  involution. Assume $\sigma$ is anisotropic and $(A,\sigma)_{F_Q}$ is
  split hyperbolic. Then either
  \begin{itemize}
  \item 
  $A$ is split, $\sigma$ is orthogonal, and there is a quadratic form
  $q$ such that 
  \[
  (A,\sigma)\simeq\Ad_{n_Q\otimes
    q}\simeq\Qc\otimes\Qc\otimes\Ad_q;
  \]
  or
  \item
  $A$ is Brauer-equivalent to $Q$, $\sigma$ is symplectic, and there
  is a quadratic form $q$ such that 
  \[
  (A,\sigma)\simeq\Qc\otimes\Ad_q.
  \]
  \end{itemize}
  In both cases, $(A,\sigma)$ contains $\Qc$.
\end{cor}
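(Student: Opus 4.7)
The proof will be a direct synthesis of the two preceding propositions, together with the Brauer kernel computation. Since $F_Q$ is the function field of the Severi--Brauer variety of $Q$, Amitsur's theorem gives that the kernel of the scalar extension on Brauer groups is $\{0,[Q]\}$; so if $(A,\sigma)_{F_Q}$ is split, $A$ is either split or Brauer-equivalent to $Q$. The plan is to run through each case and use anisotropy to force a specific type of involution, then extract the decomposition from the relevant proposition.

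\textbf{Case 1: $A$ split.} I would first rule out the symplectic case: by Proposition~\ref{split.prop}(1), a symplectic involution on a split algebra is hyperbolic, contradicting anisotropy. Hence $\sigma$ is orthogonal and $(A,\sigma)\simeq\Ad_q$ for some quadratic form $q$. By Proposition~\ref{split.prop}(2), the anisotropic kernel of $q$ is a multiple of $n_Q$; since $\sigma$ is anisotropic, $q$ itself is anisotropic, so $q\simeq n_Q\otimes q_0$ for some form $q_0$. The identification $\Ad_{n_Q}\simeq\Qc\otimes\Qc$ from \cite[(11.1)]{KMRT} then yields the stated decomposition.

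\textbf{Case 2: $A$ Brauer-equivalent to $Q$.} Here I would rule out the orthogonal case via part~(2) of the second proposition of~\S\ref{split.sec}, which forces $(A,\sigma)$ to be hyperbolic, again contradicting anisotropy. So $\sigma$ is symplectic, and part~(1) of that same proposition tells us $(A,\sigma)$ contains $\Qc$. Writing $(A,\sigma)\simeq\Qc\otimes(A_1,\sigma_1)$ with $(A_1,\sigma_1)$ the centralizer of $\Qc$, the Brauer class of $A_1$ is $[A]-[Q]=0$, so $A_1$ is split; and since $\binv$ is symplectic and $\sigma$ is symplectic, $\sigma_1$ must be orthogonal. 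Thus $(A_1,\sigma_1)\simeq\Ad_q$ for a quadratic form $q$, and the stated isomorphism follows.

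\textbf{Containment of $\Qc$.} In Case~1 this is immediate from $\Qc\otimes\Qc\otimes\Ad_q$, and in Case~2 from $\Qc\otimes\Ad_q$.

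The argument is essentially a case bookkeeping; there is no real obstacle, as the substantive work has already been done in Proposition~\ref{split.prop} and the subsequent proposition. The only point requiring minor care is the reduction $q\simeq n_Q\otimes q_0$ in Case~1, which uses that $\sigma$ anisotropic is equivalent to $q$ anisotropic (so the anisotropic kernel equals $q$ itself), and the observation in Case~2 that the centralizer of $\Qc$ inside a symplectic $(A,\sigma)$ with $[A]=[Q]$ is necessarily split orthogonal.
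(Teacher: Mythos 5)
Your proposal is correct and follows exactly the route the paper has in mind: the corollary is stated without an explicit proof ("Focusing on the anisotropic case in the propositions above, we have:"), and your argument is precisely the bookkeeping one would do to extract it from Proposition~\ref{split.prop} and the unlabeled proposition on algebras Brauer-equivalent to $Q$. The case split via Amitsur's kernel, the elimination of the "wrong" involution type in each case by anisotropy, and the derivation of the explicit decompositions (using $\Ad_{n_Q}\simeq\Qc\otimes\Qc$ in the split case and the centralizer-of-$\Qc$ argument with the type rule symplectic $\otimes$ orthogonal $=$ symplectic in the other) are all the intended steps, carried out correctly.
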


Using the results in this section, we may describe the algebras with
involution of degree $2\bmod4$ that become hyperbolic over $F_Q$:

\begin{cor}
  \label{split2.cor}
  Let $(A,\sigma)$ be a central simple $F$-algebra with
  involution. If $\deg A\equiv2\bmod4$ and $(A,\sigma)_{F_Q}$ is
  hyperbolic, then $A_{F_Q}$ is split. If moreover $\sigma$ is
  anisotropic, then it is symplectic and there is an
  odd-dimensional quadratic form $q$ over $F$ such that
  $(A,\sigma)\simeq\Qc\otimes\Ad_q$.
\end{cor}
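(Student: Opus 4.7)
The plan is to first show that $A_{F_Q}$ is split, which does not require the anisotropy of $\sigma$, and then to invoke Corollary~\ref{split.cor} to pin down the decomposition in the anisotropic case.

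For the splitness of $A_{F_Q}$, I would argue as follows. Since $(A,\sigma)_{F_Q}$ is hyperbolic, $A_{F_Q}$ admits a hyperbolic involution, so the remark in \S1 gives $\ind A_{F_Q}\mid \tfrac12\deg A$. Because $A$ carries an involution of the first kind, its Brauer class is $2$-torsion, and the same remains true after scalar extension; hence $\ind A_{F_Q}$ is a power of~$2$. Under the hypothesis $\deg A\equiv 2\bmod 4$, the integer $\tfrac12\deg A$ is odd, so the only power of~$2$ dividing it is~$1$. Thus $\ind A_{F_Q}=1$, i.e.\ $A_{F_Q}$ is split.

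Now assume in addition that $\sigma$ is anisotropic. The previous step shows that $(A,\sigma)_{F_Q}$ is \emph{split} hyperbolic, which is exactly the input of Corollary~\ref{split.cor}. Its first alternative writes $(A,\sigma)\simeq\Ad_{n_Q\otimes q}$, so that $\deg A=4\dim q\equiv 0\bmod 4$, contradicting the hypothesis. Only the second alternative can occur: $A$ is Brauer-equivalent to $Q$, $\sigma$ is symplectic, and $(A,\sigma)\simeq\Qc\otimes\Ad_q$ for some quadratic form $q$ over $F$. The identity $\deg A=2\dim q$ then forces $\dim q$ to be odd, giving the claim.

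The argument is really just bookkeeping once Corollary~\ref{split.cor} and the earlier index remark are in hand, so no step is a genuine obstacle. If anything needs care, it is the parity argument in the first paragraph, where one must combine the divisibility $\ind A_{F_Q}\mid \tfrac12\deg A$ with the $2$-primary nature of $\ind A_{F_Q}$—both are standard, but the conclusion depends on invoking them together.
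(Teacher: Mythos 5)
Your proof is correct and follows essentially the same route as the paper: the paper writes $A\simeq M_r(H)$ with $r$ odd and observes that if $H_{F_Q}$ were division then $A_{F_Q}$ could not carry a hyperbolic involution, which is just a concrete instance of your index argument ($\ind A_{F_Q}$ is a power of $2$ dividing the odd number $\tfrac12\deg A$, hence equals $1$); both proofs then conclude by Corollary~\ref{split.cor}, you merely spell out the degree bookkeeping that the paper leaves implicit.
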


\begin{proof}
  Since $\deg A\equiv2\bmod4$, we have $A\simeq M_r(H)$ for some odd
  integer $r$ and some quaternion algebra $H$ over $F$. If $H$ is
  division, $A$ does not admit a hyperbolic involution. Therefore, the
  hypothesis that $(A,\sigma)_{F_Q}$ is hyperbolic implies that
  $H_{F_Q}$ is split, hence $(A,\sigma)_{F_Q}$ is split
  hyperbolic. The last statement then readily follows from
  Corollary~\ref{split.cor}. 
\end{proof}

In view of this corollary, we only consider central simple
algebras of degree divisible by~$4$ in the following sections.

\section{Algebras of degree 4}
\label{deg4.sec}

Throughout this section, $A$ is a central simple algebra of
degree~$4$ over an arbitrary field $F$ of characteristic different
from~$2$. Involutions on $A$ are classified by cohomological 
invariants (see \cite[Ch.~4]{KMRT}), which we use to give an explicit
description of the involutions on $A$ that become hyperbolic over
$F_Q$. 

\subsection{The orthogonal case}
\label{deg4orth.sec}
This case is easy to handle using Clifford algebras. 

\begin{prop}
  \label{deg4orth.prop}
  Let $\sigma$ be an orthogonal involution on $A$.
  \begin{enumerate}
  \item[(1)]
  $(A,\sigma)$ is hyperbolic if and only if $\disc\sigma=1$ and one of
  the components of $C(A,\sigma)$ is split;
  \item[(2)]
  $(A,\sigma)$ contains $\Qc$ if and only if $\disc\sigma=1$ and one
  of the components of $C(A,\sigma)$ is isomorphic to $Q$;
  \item[(3)]
  $A$ contains $Q$ if and only if $A\otimes_FQ$ is Brauer-equivalent
  to a quaternion algebra.
  \end{enumerate}
\end{prop}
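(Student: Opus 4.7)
The three parts all reduce, via the cohomological invariants of $\sigma$, to well-known structural facts about degree-$4$ orthogonal involutions. The central tool for parts (1) and (2) is the Knus--Parimala--Sridharan decomposition theorem (cf.\ \cite[(15.7), (15.12)]{KMRT}): when $\disc\sigma=1$, the components $C_+(A,\sigma)$ and $C_-(A,\sigma)$ are quaternion $F$-algebras, and
\[
(A,\sigma)\simeq\bigl(C_+(A,\sigma),\binv\bigr)\otimes_F\bigl(C_-(A,\sigma),\binv\bigr),
\]
each factor carrying its canonical symplectic involution; conversely, if $(A,\sigma)\simeq(Q_1,\binv)\otimes_F(Q_2,\binv)$, then $\disc\sigma=1$ and $\{C_+(A,\sigma),C_-(A,\sigma)\}=\{Q_1,Q_2\}$ as $F$-algebras.

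For (1), the forward direction is immediate: a hyperbolic orthogonal involution has trivial discriminant, and \cite[(8.31)]{KMRT} furnishes a split component of $C(A,\sigma)$. For the converse, if $\disc\sigma=1$ and say $C_+(A,\sigma)$ is split, then $(C_+(A,\sigma),\binv)$ is $(M_2(F),\binv)$, which is hyperbolic; tensoring with it yields a hyperbolic involution on $A$.

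For (2), if $C_+(A,\sigma)\simeq Q$ as $F$-algebras, then since the canonical involution on a quaternion algebra is the unique symplectic involution up to isomorphism, the isomorphism carries $\binv$ to $\binv$, and the decomposition above reads $(A,\sigma)\simeq\Qc\otimes(C_-(A,\sigma),\binv)$, which contains $\Qc$. Conversely, if $(A,\sigma)$ contains $\Qc$, then $(A,\sigma)\simeq\Qc\otimes(B,\tau)$ for some degree-$2$ algebra $(B,\tau)$; orthogonality of $\sigma=\binv\otimes\tau$ together with the symplectic nature of $\binv$ forces $\tau$ to be symplectic, hence $\tau=\binv_B$. The decomposition theorem then gives $\disc\sigma=1$ and identifies $\{C_+(A,\sigma),C_-(A,\sigma)\}=\{Q,B\}$, exhibiting $Q$ as one of the components.

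Part (3) is independent of any involution. If $Q\subset A$, the double centralizer theorem gives $A\simeq Q\otimes_F C$ with $C=Z_A(Q)$ a quaternion algebra, and $A\otimes_FQ\sim Q^{\otimes 2}\otimes_F C\sim C$ since $[Q]$ is $2$-torsion in the Brauer group. Conversely, from $A\otimes_FQ\sim H$ with $H$ quaternion we deduce $[A]=[H\otimes_FQ]$; since $H\otimes_FQ$ and $A$ both have degree $4$, they are isomorphic, and $A\simeq Q\otimes_FH$ visibly contains $Q$. The main obstacle throughout is locating and invoking the precise structure theorem for degree-$4$ orthogonal involutions with trivial discriminant; once that is available, each equivalence is a short manipulation with Clifford algebras and the Brauer group.
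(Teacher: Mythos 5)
Your proof is correct and rests on the same structural pillar as the paper's: the decomposition theorem for degree-$4$ orthogonal involutions with trivial discriminant (\cite[(15.12)]{KMRT}), which identifies $(A,\sigma)$ with $(C_+,\binv)\otimes(C_-,\binv)$ and gives the converse correspondence $\{C_+(A,\sigma),C_-(A,\sigma)\}=\{Q_1,Q_2\}$ for $(Q_1,\binv)\otimes(Q_2,\binv)$. For part (2) this is exactly the paper's argument; for part (3) your centralizer-plus-Brauer-class computation is the paper's argument as well. The only divergence is in part (1), where the paper simply cites \cite[2.5]{BST}, whereas you rederive the statement from the decomposition theorem together with \cite[(8.31)]{KMRT} for the forward direction and the fact that tensoring a hyperbolic factor $(M_2(F),\binv)$ with anything produces a hyperbolic involution for the converse. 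This is a modest but genuine gain in self-containedness: it folds (1) into the same framework used for (2), at the cost of invoking the decomposition theorem rather than the off-the-shelf hyperbolicity criterion.
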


\begin{proof}
  (1) This readily follows from \cite[2.5]{BST}.

  (2) If $(A,\sigma)$ contains $\Qc$, then there is a quaternion
  $F$-algebra $Q'$ with canonical involution $\binv$ such that
  \[
  (A,\sigma)\simeq\Qc\otimes(Q',\binv).
  \]
  By \cite[(15.12)]{KMRT}, this relation holds if and only if
  $C(A,\sigma)\simeq Q\times Q'$. This proves~(2).

  (3) If $A$ contains (a copy of) $Q$, then the centralizer of $Q$ in
  $A$ is a quaternion $F$-algebra $Q'$, and we have
  \[
  A\simeq Q\otimes_FQ'.
  \]
  This relation holds if and only if $A\otimes_FQ$ is
  Brauer-equivalent to $Q'$, hence (3) follows.
\end{proof}

\begin{cor}
  Let $\sigma$ be an orthogonal involution on $A$. Then
  $(A,\sigma)_{F_Q}$ is hyperbolic if and only if $(A,\sigma)$ is
  hyperbolic or contains $\Qc$.
\end{cor}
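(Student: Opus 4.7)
The plan is to prove each implication by reducing to the preceding propositions, with all of the substance lying in the forward direction. For the easy direction, if $(A,\sigma)$ is hyperbolic then $(A,\sigma)_{F_Q}$ is hyperbolic by scalar extension; and if $(A,\sigma)$ contains $\Qc$, write $(A,\sigma)\simeq\Qc\otimes(C,\gamma)$ and note that $\binv$, being symplectic, becomes hyperbolic over any splitting field of $Q$, hence in particular over $F_Q$, so the tensor product is hyperbolic there as well. Both of these observations are already recorded in the introduction.

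For the forward direction, I would first invoke Proposition~\ref{orthog.prop}: since $(A,\sigma)_{F_Q}$ is hyperbolic and $\deg A=4\equiv 0\bmod 4$, we obtain $\disc\sigma=1$ and at least one component $C_\pm(A,\sigma)$ becomes split over $F_Q$. Because the kernel of the scalar extension map $\mathrm{Br}(F)\to\mathrm{Br}(F_Q)$ equals $\{0,[Q]\}$, the Brauer class of that component over $F$ lies in $\{0,[Q]\}$. This places us into one of the two cases listed just before \S\ref{split.sec}.

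In Case~1 the component is already split over $F$, and Proposition~\ref{deg4orth.prop}(1) yields that $(A,\sigma)$ is hyperbolic. In Case~2 the component in question has Brauer class $[Q]$; but in degree~$4$ each of $C_+(A,\sigma)$ and $C_-(A,\sigma)$ is a quaternion algebra, and a non-split quaternion algebra is determined up to isomorphism by its Brauer class, so the component is isomorphic to $Q$. Proposition~\ref{deg4orth.prop}(2) then gives that $(A,\sigma)$ contains $\Qc$.

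The only minor subtlety is the passage from ``split over $F_Q$'' (as supplied by Proposition~\ref{orthog.prop}) to ``split'' or ``isomorphic to $Q$'' over $F$, which rests on the Brauer-kernel description combined with the feature, special to degree~$4$, that the Clifford components are themselves quaternion algebras. Beyond invoking the two preceding propositions in turn, I do not anticipate any real obstacle.
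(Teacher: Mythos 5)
Your proof is correct and takes essentially the same approach as the paper: deduce $\disc\sigma=1$ and that one Clifford component becomes split over $F_Q$, use the Brauer-kernel $\{0,[Q]\}$ together with the fact that in degree~$4$ the Clifford components are quaternion algebras to conclude that component is split or isomorphic to $Q$ over $F$, and then apply Proposition~\ref{deg4orth.prop}. The only cosmetic difference is that you invoke Proposition~\ref{orthog.prop} whereas the paper re-derives the same two facts directly (quadratic closure of $F$ in $F_Q$ for the discriminant, and Proposition~\ref{deg4orth.prop}(1) applied over $F_Q$ for the split component).
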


\begin{proof}
  As noticed in the introduction, any $(A,\sigma)$ containing $\Qc$ is
  hyperbolic over $F_Q$, so we only have to prove the converse. Assume
  $(A,\sigma)_{F_Q}$ hyperbolic. Since $F$ is quadratically closed in
  $F_Q$, the discriminant of $\sigma$ is trivial, so $C(A,\sigma)$ is
  a direct product of two quaternion algebras. At least one of these
  quaternion algebras splits over $F_Q$ by
  Proposition~\ref{deg4orth.prop}(1), hence that component must be
  either split or isomorphic to $Q$. It follows that $(A,\sigma)$ is
  either hyperbolic or contains $\Qc$, by
  Proposition~\ref{deg4orth.prop}. 
\end{proof}

Note that a central simple algebra of degree~$4$ with hyperbolic
involution does not necessarily contain $\Qc$, even if it contains
$Q$: if $Q'$ is a quaternion $F$-algebra such that $Q\otimes_FQ'$ has
index~$2$, then $(A,\sigma)=(M_2(F),\binv)\otimes(Q',\binv)$ is
hyperbolic over $F$ and satisfies $C(A,\sigma)\simeq M_2(F)\times Q'$
by \cite[(15.12)]{KMRT}. Therefore, Proposition~\ref{deg4orth.prop}(2) shows
that $(A,\sigma)$ does not contain $\Qc$, even though
Proposition~\ref{deg4orth.prop}(3) shows that $A$ contains
$Q$. This situation does not occur in the symplectic case, in view of
Proposition~\ref{deg4symp.prop} below. 

\subsection{The symplectic case}
Symplectic involutions on $A$ are classified up to conjugation by a
relative invariant $\Delta$ with values in the Galois cohomology group
$H^3(F,\mu_2)$, see\footnote{%
The discussion in \cite{KMRT} is in terms of the $3$-fold Pfister form
$j$ whose Arason invariant is $\Delta$.}
\cite[(16.9)]{KMRT}. We denote by $[A]$ the Brauer class of $A$,
viewed as an element of $H^2(F,\mu_2)$, and for $\lambda\in F^\times$
we denote by $(\lambda)$ the square class of $\lambda$, viewed as an
element in $H^1(F,\mu_2)$. Using the invariant $\Delta$, we show:

\begin{prop}
  \label{deg4symp.prop}
  Let $\sigma$ be a symplectic involution on $A$. If $A$ contains $Q$
  and $(A,\sigma)_{F_Q}$ is hyperbolic, then $(A,\sigma)$ contains $\Qc$.
\end{prop}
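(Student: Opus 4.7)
The plan is to classify $\sigma$ up to conjugation using the Knus--Lam--Shapiro--Tignol relative invariant $\Delta\in H^3(F,\mu_2)$ of \cite[(16.9)]{KMRT}, and to compare $\sigma$ with the family of visibly decomposable symplectic involutions on $A$. Since $A$ contains $Q$, the double centralizer theorem yields a decomposition $A\simeq Q\otimes_F B$ for some quaternion $F$-algebra $B$. For every orthogonal involution $\rho$ on $B$, the tensor product $\sigma_\rho:=\binv\otimes\rho$ is a symplectic involution on $A$ that manifestly contains $\Qc$; my goal is to exhibit some such $\rho$ with $\Delta(\sigma,\sigma_\rho)=0$, which will force $\sigma$ to be conjugate to $\sigma_\rho$ and hence $(A,\sigma)$ to contain $\Qc$.

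A Clifford-algebra calculation in the style of \cite[(42.11)]{KMRT} identifies the $3$-fold Pfister form attached to $\sigma_\rho$ with $\pform{a,b,\disc\rho}$; consequently, for a fixed reference involution $\sigma_{\rho_0}$, the classes $\Delta(\sigma_\rho,\sigma_{\rho_0})=(\disc\rho\cdot\disc\rho_0)\cdot[Q]$ take values in the subgroup $H^1(F,\mu_2)\cdot[Q]\subseteq H^3(F,\mu_2)$ as $\rho$ varies. On the other hand, $\sigma_{\rho_0}$ contains $\Qc$ and thus becomes hyperbolic over $F_Q$, as does $\sigma$ by hypothesis; because $A_{F_Q}$ admits a unique symplectic hyperbolic involution up to conjugation, $\Delta(\sigma,\sigma_{\rho_0})$ vanishes after extension to $F_Q$.

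Arason's theorem, in the form identifying the kernel of $H^3(F,\mu_2)\to H^3(F_Q,\mu_2)$ with $H^1(F,\mu_2)\cdot[Q]$, then gives $\Delta(\sigma,\sigma_{\rho_0})=(\lambda)\cdot[Q]$ for some $\lambda\in F^\times$. Matching this $\lambda$ against the discriminant of a suitable orthogonal involution $\rho$ on $B$ produces $\Delta(\sigma_\rho,\sigma_{\rho_0})=(\lambda)\cdot[Q]=\Delta(\sigma,\sigma_{\rho_0})$, hence $\Delta(\sigma,\sigma_\rho)=0$, and \cite[(16.9)]{KMRT} delivers the conjugacy $\sigma\simeq\sigma_\rho$. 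The principal obstacle is this final matching step: the discriminants of orthogonal involutions on a quaternion algebra $B$ a priori cover only the values of the pure-quaternion norm form, not all of $F^\times/F^{\times 2}$, so one must verify that the prescribed class $\lambda$ is achievable modulo $D(n_Q)$. This is where the hypothesis $Q\subseteq A$ enters essentially, through the interplay between $n_Q$ and $n_B$ imposed by the decomposition $A\simeq Q\otimes B$.
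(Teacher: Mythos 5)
Your overall strategy coincides with the paper's: decompose $A\simeq Q\otimes_F B$ by the double centralizer theorem, observe that every $\sigma_\rho=\binv\otimes\rho$ with $\rho$ orthogonal on $B$ is a symplectic involution containing $\Qc$, and attempt to show that $\sigma$ is conjugate to some $\sigma_\rho$ by comparing relative invariants $\Delta$. Your computation that $\Delta(\sigma_\rho,\sigma_{\rho_0})$ is controlled by $\disc\rho\cdot\disc\rho_0$ is a correct repackaging of \cite[(16.18)]{KMRT} (since $\Nrp_{\sigma_{\rho_0}}(1\otimes w)=\Nrd_B(w)$ and $\disc(\Int(w)\circ\rho_0)\equiv\Nrd_B(w)\cdot\disc\rho_0\bmod F^{\times2}$), and the reduction ``$\Delta$ dies over $F_Q$, hence equals $(\lambda)\cup[Q]$ for some $\lambda$'' is exactly the paper's first move.

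However, as written the proposal does not prove the proposition: you explicitly flag the ``final matching step'' as the principal obstacle and then leave it unresolved, offering only a gesture toward ``the interplay between $n_Q$ and $n_B$.'' That step is precisely where the substance of the argument lies. Concretely, with $B=(a',b')_F$ and $\rho_0$ of discriminant $a'$, the values $\Nrd_B(w)$ for $w\in\Sym(B,\rho_0)^\times$ are exactly the nonzero values of the ternary form $\qform{1,-b',a'b'}$, which need not exhaust $F^\times/F^{\times 2}$; so given $\Delta_{\sigma_{\rho_0}}(\sigma)=(\lambda)\cup[Q]$ you must still show that $\lambda$ can be adjusted within its coset modulo $D(n_Q)$ so as to lie in $D(\qform{1,-b',a'b'})$. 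This is not automatic, and it is the technical core of the paper's proof: writing $\sigma=\Int(x)\circ\sigma_{\rho_0}$, one uses that $(\Sym(A,\sigma_{\rho_0}),\Nrp_{\sigma_{\rho_0}})$ is an Albert quadratic space of $A$, then combines Arason's common slot lemma with the Arason--Pfister Hauptsatz to replace $x$ by an $x'$ for which simultaneously $\Delta_{\sigma_{\rho_0}}(\sigma)=(\Nrp_{\sigma_{\rho_0}}(x'))\cup[Q]$ and $(\Nrp_{\sigma_{\rho_0}}(x'))\cup[B]=0$, computes explicitly $\Nrp_{\sigma_{\rho_0}}(x')=x_0^2-b'x_1^2-aa'x_2^2$ from the decomposition of $\Sym(A,\sigma_{\rho_0})$, and finally shows by an isotropy argument (Lemma~\ref{fin.lem}) that this value is, up to a norm from $F(\sqrt a)$, represented by $\qform{1,-b',a'b'}$. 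None of that analysis appears in your proposal, so the argument is genuinely incomplete at the decisive point.
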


\begin{proof}
  Since $A$ contains $Q$, it decomposes as $A=Q\otimes Q'$ for some
  quaternion algebra $Q'=(a',b')_F$. Let $\tau'$ be an orthogonal
  involution on $Q'$ of discriminant $a'$. The involution
  $\tau=\binv\otimes\tau'$ on $A=Q\otimes Q'$ is of symplectic type,
  and clearly hyperbolic over $F_Q$ since $(A,\tau)$ contains
  $\Qc$. Similarly, for every invertible $y\in\Sym(Q',\tau')$, the
  involution 
  \[
  \tau_y=\Int(1\otimes y)\circ\tau=\binv\otimes(\Int(y)\circ\tau)
  \]
  is such that $(A,\tau_y)$ contains $\Qc$. We prove below that
  $\sigma$ is conjugate to $\tau_y$ for a suitable $y$. By
  \cite[(16.18)]{KMRT}, the relative discriminant $\Delta_\tau(\tau')$
  is given by
  \[
  \Delta_\tau(\tau')=(\Nrp_\tau(1\otimes y))\cup [A]\in H^3(F,\mu_2),
  \]
  where $\Nrp_\tau$ is the Pfaffian norm, as defined
  in~\cite[(2.9)]{KMRT}. Since this relative discriminant classifies
  symplectic involutions on $A$ up to conjugation, it suffices to show: 
\renewcommand{\qed}{\relax}
\end{proof}

\begin{lem}
  \label{deg4symp.lem}
  There exists an invertible element $y\in \Sym(Q',\tau')$ such that 
  \[
  \Delta_\tau(\sigma)=(\Nrp_\tau(1\otimes y))\cup [A].
  \]
\end{lem}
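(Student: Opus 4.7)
The plan is to use the classification of symplectic involutions on~$A$ by the relative invariant $\Delta_\tau$. I would write $\sigma=\Int(s)\circ\tau$ for some invertible $s\in\Sym(A,\tau)$, so that \cite[(16.18)]{KMRT} gives
\[
\Delta_\tau(\sigma)=(\Nrp_\tau(s))\cup[A].
\]
Since $(A,\tau)$ contains $\Qc$, the involution $\tau_{F_Q}$ is hyperbolic; $\sigma_{F_Q}$ is hyperbolic by hypothesis; and hyperbolic symplectic involutions being conjugate, this forces $\Delta_\tau(\sigma)_{F_Q}=0$. The standard Arason-type description of $\ker\bigl(H^3(F,\mu_2)\to H^3(F_Q,\mu_2)\bigr)$ as $H^1(F,\mu_2)\cdot[Q]$ then furnishes $\lambda\in F^\times$ with $\Delta_\tau(\sigma)=(\lambda)\cup[Q]$.

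Next I would translate the conclusion into a concrete condition on $y$. From $\Nrd_A(1\otimes y)=\Nrd_{Q'}(y)^2$ and $\Nrp_\tau(s)^2=\Nrd_A(s)$ one gets $\Nrp_\tau(1\otimes y)\equiv\Nrd_{Q'}(y)\pmod{F^{\times2}}$ for $y\in\Sym(Q',\tau')$. Since $\Nrd_{Q'}(y)$ is a reduced norm from $Q'$, it is killed by cup-product with $[Q']$; combined with the Brauer relation $[A]=[Q]+[Q']$, this yields
\[
(\Nrp_\tau(1\otimes y))\cup[A]=(\Nrd_{Q'}(y))\cup[Q].
\]
The lemma therefore reduces to producing $y\in\Sym(Q',\tau')^\times$ with $\lambda\cdot\Nrd_{Q'}(y)\in D_F(n_Q)\cdot F^{\times2}$, where $D_F(n_Q)$ denotes the group of nonzero values of $n_Q$.

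To construct such $y$ I would revisit the identity $\Delta_\tau(\sigma)=(\Nrp_\tau(s))\cup[A]$. Its vanishing over $F_Q$, together with $[A]_{F_Q}=[Q']_{F_Q}$, forces $\Nrp_\tau(s)\in D_{F_Q}(n_{Q'})$. A Cassels--Pfister type descent for the function field of the conic of $Q$, in the spirit of~\cite{HLVG}, then expresses $\Nrp_\tau(s)$ over $F$ as a product of a value of $n_{Q'}$ and a value of $n_Q$, modulo squares. An explicit inspection of $\Sym(Q',\tau')$, whose restricted reduced norm is a Pfister neighbour of $n_{Q'}$, allows the $n_{Q'}$-factor to be replaced, up to a correction in $D_F(n_Q)\cdot F^{\times2}$, by a value of $\Nrd_{Q'}$ on $\Sym(Q',\tau')^\times$, yielding the sought $y$.

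The main obstacle is this last adjustment: converting a representation by the full $4$-dimensional form $n_{Q'}$ into one by the $3$-dimensional subform $\Nrd_{Q'}|_{\Sym(Q',\tau')}$, at the cost of an element of $D_F(n_Q)$. The delicate point is that the ternary form on $\Sym(Q',\tau')^\times$ differs from the pure part of $n_{Q'}$ by a summand scaled by $\disc\tau'=a'$, and the proof hinges on absorbing this discrepancy into the norm group of $Q$.
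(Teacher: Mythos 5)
Your high-level plan matches the paper's: write $\sigma=\Int(s)\circ\tau$, note $\Delta_\tau(\sigma)=(\lambda)\cup[Q]$ for some $\lambda$ because the invariant dies over $F_Q$, observe that $\Nrp_\tau(1\otimes y)=\Nrd_{Q'}(y)$ so that $(\Nrp_\tau(1\otimes y))\cup[A]=(\Nrd_{Q'}(y))\cup[Q]$, and thus reduce the lemma to exhibiting an invertible $y\in\Sym(Q',\tau')$ with $\lambda\,\Nrd_{Q'}(y)\in D_F(n_Q)\cdot F^{\times2}$. That reduction is correct and is implicitly what the paper's final display (after Lemma~\ref{fin.lem}) verifies.

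However, the two steps that are supposed to produce $y$ are where the actual content lies, and both are gaps. First, the ``Cassels--Pfister type descent'' asserting that $\Nrp_\tau(s)$ is, modulo squares, a product of a value of $n_Q$ and a value of $n_{Q'}$ is not justified and does not follow from $\Nrp_\tau(s)\in D_{F_Q}(n_{Q'})$: that hypothesis only places $(\Nrp_\tau(s))\cup[Q']$ in the kernel of $H^3(F,\mu_2)\to H^3(F_Q,\mu_2)$, which after the common slot lemma gives $(\Nrp_\tau(s))\cup[Q']=(\lambda)\cup[Q']=(\lambda)\cup[Q]$, hence $\lambda\Nrp_\tau(s)\in D_F(n_{Q'})F^{\times2}$; but $\lambda$ itself need not lie in $D_F(n_Q)F^{\times2}$, so the product decomposition you want is not available. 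The paper avoids this by a different mechanism: it regards $\Nrp_\tau$ as the Albert form of $A$ with $e_2(\Nrp_\tau)=[Q]+[Q']$, deduces from $(\lambda)\cup([Q]+[Q'])=0$ that $\pform{\lambda}\otimes\Nrp_\tau$ has trivial $e_3$, hence is hyperbolic by Arason--Pfister, and thereby replaces $x$ by an $x'\in\Sym(A,\tau)$ with $\Nrp_\tau(x')=\lambda\Nrp_\tau(x)$ and $(\Nrp_\tau(x'))\cup[Q']=0$. Second, the final adjustment --- replacing a value of $n_{Q'}$ by a value of the three-dimensional form $\Nrd_{Q'}|_{\Sym(Q',\tau')}\simeq\qform{1,-b',a'b'}$ at the cost of a factor in $D_F(n_Q)$ --- is precisely the content of Lemma~\ref{fin.lem}, which rests on the explicit coordinate expression $\Nrp_\tau(x')=x_0^2-b'x_1^2-aa'x_2^2$ and on combining the hyperbolicity of $\pform{\mu,a,b'}$ with the isotropy of $\qform{1,-b',-aa',-\mu}$ to show $\qform{1,-\mu,-a,\mu b',-a'b'\mu}$ is isotropic. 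You flag this as the ``main obstacle'' but do not resolve it; without an argument here the proof is incomplete.
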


\begin{proof}
  The involution $\sigma$ can be written as $\sigma=\Int(x)\circ \tau$
  for some $x\in\Sym(A,\sigma)$, hence we already have  
  \begin{equation} 
  \label{disc.eq} 
  \Delta_\tau(\sigma)=(\Nrp_\tau(x))\cup [A],  
  \end{equation} 
  and we want to prove we can substitute for $x$ some element
  $1\otimes y$ with $y\in\Sym(Q',\tau')$. Since both $\sigma$ and
  $\tau$ become hyperbolic over $F_Q$, the relative discriminant
  $\Delta_\tau(\sigma)$ is killed by $F_Q$, hence  
  $(\Nrp_\tau(x))\cup [Q']=(\lambda)\cup[Q]$ for some $\lambda\in
  F^\times$. By the common slot lemma~\cite[Lemma 1.7]{A}, we may
  even assume  
  \begin{equation} 
  (\Nrp_\tau(x))\cup [Q']=(\lambda)\cup[Q']=(\lambda)\cup[Q],
  \end{equation} 
  from which we deduce 
  \begin{equation}
  \label{la.eq}
  (\lambda)\cup([Q]+[Q'])=0,\quad\text{and}\quad(\lambda\Nrp_\tau(x))\cup
  [Q']=0.  
  \end{equation} 
  Hence the relative discriminant of $\sigma$ is 
  \begin{equation*}
  \Delta_\tau(\sigma)=(\Nrp_\tau(x))\cup([Q]+[Q'])=(\lambda
  \Nrp_\tau(x))\cup([Q]+[Q'])=(\lambda \Nrp_\tau(x))\cup[Q].
  \end{equation*}
  Moreover, the quadratic space $(\Sym(A,\sigma),\Nrp_\tau)$ is an
  Albert quadratic space for the biquaternion algebra $A$
  by~\cite[(16.8)]{KMRT}. Hence, its Clifford invariant is
  $e_2(\Nrp_\tau)=[A]=[Q]+[Q']$. We deduce from~(\ref{la.eq}) that the
  quadratic form $\pform\lambda\otimes\Nrp_\tau$ has trivial Arason
  invariant $e_3$, and hence is hyperbolic by the Arason--Pfister
  Hauptsatz. So there exists an element $x'\in\Sym(A,\sigma)$ such that  
  $\lambda\Nrp_\tau(x)=\Nrp_\tau(x')$, and we have proven 
  \begin{equation} \label{Q'.eq} 
  \Delta_\tau(\sigma)=(\Nrp_\tau(x'))\cup [Q]
  \quad\text{and}\quad(\Nrp_\tau(x'))\cup [Q']=0 
  \end{equation} 
  for some $x'\in\Sym(A,\sigma)$. 

  Let us now pick a pure quaternion $i'\in Q'$ such that $i'^2=a'$ and
  $\tau'=\Int(i')\circ\binv$. Denoting by $Q^0$ the vector space of
  pure quaternions in $Q$, we have
  \begin{equation*} 
  \Sym(A,\tau)=(1\otimes\Sym(Q',\tau'))\oplus(Q^0\otimes i'). 
  \end{equation*} 
  Hence $x'$ can be written as $x'=x_0+1\otimes\xi_1+\xi_2\otimes i'$
  for some $x_0\in F$, some pure quaternion $\xi_1\in
  Q'^0\cap\Sym(Q',\tau')$ and some $\xi_2\in Q^0$. The pure quaternion
  $\xi_1$ being $\tau'$-symmetric, it anticommutes with $i'$, and
  changing of quaternionic basis if necessary, we may assume that
  $\xi_1^2=b'x_1^2$ for some $x_1\in F$. Similarly, we may assume that
  $\xi_2^2=ax_2^2$ for some $x_2\in F$. Note that we allow $x_i=0$ for
  $i=0$, $1$, and $2$ since some term might be zero in the
  decomposition of $x'$. We then have 
  \begin{equation} \label{x'.eq} 
  \Nrp_\tau(x')=x_0^2-b'x_1^2-aa'x_2^2,
  \end{equation}
  and the following lemma finishes the proof: 
\renewcommand{\qed}{\relax}
\end{proof}

\begin{lem}
  \label{fin.lem} 
  There exists $z_0$, $z_1$, $y_0$, $y_1$ and $y_2$ such that 
  $\Nrp_\tau(x')(z_0^2-az_1^2)=y_0^2-b'y_1^2+a'b'y_2^2\not=0$.
\end{lem}

Indeed, if we let $y=y_0+y_1j'+y_2i'j'$, where $j'$ is a pure
quaternion in $Q'$ such that $j'^2=b'$ and $i'j'=-j'i'$, we have 
$\Nrp_\tau(1\otimes y)=Nrd_{Q'}(y)=y_0^2-b'y_1^2+a'b'y_2^2$
(see~\cite[(2.11)]{KMRT}). Hence, 
\begin{align*}
\Delta_\tau(\sigma)& =(\Nrp_\tau(x'))\cup
[(a,b)_F]=(\Nrp_\tau(x')(z_0^2-az_1^2))\cup
[(a,b)_F]\\
& =(\Nrd_{Q'}(y))\cup[A]=(\Nrp_\tau(1\otimes y))\cup[A],
\quad\text{ with }y\in\Sym(Q',\tau').
\end{align*}

\begin{proof}[Proof of Lemma~\ref{fin.lem}]  
  If $\qform{1,-b',a'b'}$ is isotropic, then it is universal, hence
  represents $\Nrp_\tau(x')$ and we are done. Otherwise, denote
  $\mu=\Nrp_\tau(x')$; by~(\ref{x'.eq}), we have
  $(\mu)\cup[(aa',b')_F]=0$, and combining 
  with~\eqref{Q'.eq}, we get $(\mu)\cup[(a,b')]=0$. In terms of
  quadratic forms, this means that the $3$-fold Pfister form  
  \begin{equation} \label{pform.eq}
  \pform{\mu,a,b'}=\qform{1,-\mu,-a,\mu b'}\perp\qform{\mu
  a,ab',-\mu ab'}\perp\qform{-b'}
  \end{equation} 
  is hyperbolic. On the other hand, by~\eqref{x'.eq}, the quadratic
  form $\qform{1,-b',-aa',-\mu}$ is isotropic. Multiplying by
  $(-ab'\mu)$, we get that $-a'b'\mu$ is represented by the form
  $\qform{\mu a, ab',-\mu ab'}$. Hence, in view of~\eqref{pform.eq},
  the quadratic form $\qform{1,-\mu,-a,\mu b',-a'b'\mu}$, which is a
  $5$-dimensional subform of $\pform{\mu,a,b'}$, is necessarily
  isotropic. Since the quadratic forms $\qform{1,-a}$ and
  $\qform{1,-b',a'b'}$ are anisotropic, this 
  completes the proof. 
\end{proof} 

Using Proposition~\ref{deg4symp.prop}, we may characterize the
symplectic involutions on $A$ that become hyperbolic over $F_Q$:

\begin{thm}
  \label{deg4symp.thm}
  Let $\sigma$ be a symplectic involution on $A$. If
  $(A,\sigma)_{F_Q}$ is hyperbolic, then either
  \begin{enumerate}
  \item[(a)] 
  $(A,\sigma)\simeq\Qc\otimes(Q',\rho)$ for some quaternion
  $F$-algebra with orthogonal involution $(Q',\rho)$, or
  \item[(b)]
  $(A,\sigma)\simeq\Ad_{\pform{\lambda}}\otimes(Q',\binv)$ for some
  quaternion $F$-algebra $Q'$ and some $\lambda\in F^\times$ with the
  following properties:
  $Q\otimes_FQ'$ is a division algebra and the norm forms $n_Q$, $n_{Q'}$
  satisfy $\pform{\lambda}\cdot n_Q\simeq\pform{\lambda}\cdot n_{Q'}$.
  \end{enumerate}
  Conversely, if $(A,\sigma)$ is of either type above, then
  $(A,\sigma)_{F_Q}$ is hyperbolic.
\end{thm}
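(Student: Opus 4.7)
The plan is to handle the forward direction by case analysis on the Brauer class of $A$, reducing the non-obvious situation to a statement about Pfister forms, and to verify the converse by direct computation with Jacobson-associated quadratic forms. If $A$ is division, Proposition~\ref{division.prop} gives that $A$ contains $Q$, and then Proposition~\ref{deg4symp.prop} yields that $(A,\sigma)$ contains $\Qc$, placing us in case~(a). If $A$ is split, Proposition~\ref{split.prop}(1) says $\sigma$ is hyperbolic and, since $\deg A=4$, $(A,\sigma)$ contains $\Qc$, again case~(a). If $A$ is Brauer-equivalent to a non-split quaternion algebra $Q'$, then either $A$ contains $Q$---which by a direct Brauer-class calculation occurs precisely when $Q\otimes_FQ'$ has index $\le 2$, so that Proposition~\ref{deg4symp.prop} gives case~(a)---or $A$ does not contain $Q$, in which case $Q\otimes_FQ'$ must be division and we are in the setting of case~(b).

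In case~(b), $A\simeq M_2(Q')$, and $\sigma$ is adjoint to a rank-two hermitian form over $(Q',\binv)$ which, after diagonalisation and an overall scaling, becomes $\qform{1,-\lambda}_{Q'}$. Hence $(A,\sigma)\simeq\Ad_{\pform{\lambda}}\otimes(Q',\binv)$. By Jacobson's theorem, $\sigma_{F_Q}$ is hyperbolic if and only if the associated three-fold Pfister form $\pform{\lambda}\cdot n_{Q'}$ is hyperbolic over $F_Q$. Using Arason's identification of $\ker\bigl(W(F)\to W(F_Q)\bigr)$ with the ideal $n_Q\cdot W(F)$, together with the injectivity of the Arason invariant $e_3$ on three-fold Pfister forms, one obtains $\pform{\lambda}\cdot n_{Q'}\simeq\pform{\nu}\cdot n_Q$ for some $\nu\in F^\times$. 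The delicate step---and the main obstacle---is to upgrade this to $\pform{\lambda}\cdot n_{Q'}\simeq\pform{\lambda}\cdot n_Q$, which cohomologically is the vanishing $(\lambda)\cup[Q\otimes_FQ']=0$ in $H^3(F,\mu_2)$. I would exploit the freedom to replace $\lambda$ by $\lambda n$ for any $n\in\Nrd(Q')$---a substitution preserving the isomorphism class of $(A,\sigma)$, since rank-two hermitian forms over $(Q',\binv)$ are classified up to similarity by their discriminant modulo $\Nrd(Q')$---combined with a common-slot (linkage) argument applied to the two three-fold Pfister factorisations $\pform{\lambda}\cdot n_{Q'}\simeq\pform{\nu}\cdot n_Q$, in order to select a representative of $\lambda$ within the required coset.

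For the converse, case~(a) is immediate since $\Qc$ becomes hyperbolic over $F_Q$. In case~(b), the Jacobson-associated quadratic form of $\qform{1,-\lambda}_{Q'}$ is $\pform{\lambda}\cdot n_{Q'}\simeq\pform{\lambda}\cdot n_Q$ by hypothesis, which is hyperbolic over $F_Q$ since $n_Q$ is. Jacobson's theorem then gives that $\sigma_{F_Q}$ is hyperbolic.
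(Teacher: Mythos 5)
Your proposal is correct and follows essentially the same route as the paper: the forward direction reduces the non-obvious case (index 2, $Q\otimes Q'$ division) to the cohomological vanishing $(\lambda)\cup[Q\otimes Q']=0$, handled by Arason's common slot lemma applied to $\pform{\lambda}n_{Q'}\simeq\pform{\nu}n_Q$ together with the freedom to adjust $\lambda$ within $\lambda\Nrd(Q')$. The only difference is packaging: you phrase the degree-3 invariant via Jacobson's trace form of the hermitian form $\qform{1,-\lambda}_{Q'}$, whereas the paper uses the relative invariant $\Delta_\tau(\sigma)=(\lambda)\cup[Q']$ from \cite[(16.21)]{KMRT}; these are the same class in $H^3(F,\mu_2)$, so the arguments coincide.
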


Note that (a) and (b) are mutually exclusive since in the first
case $A\otimes_FQ$ has Schur index~$1$ or $2$, whereas it has
index~$4$ in case~(b).

\begin{proof}
  Suppose $(A,\sigma)$ is not as in case~(a) and $(A,\sigma)_{F_Q}$ is
  hyperbolic. Then
  Proposition~\ref{deg4symp.prop} shows that $A$ does not contain
  $Q$. In particular, $A$ is not division, by
  Proposition~\ref{division.prop}, so $A\simeq M_2(Q')$ for some
  quaternion $F$-algebra $Q'$, and $Q\otimes_FQ'$ is a division algebra
  by Proposition~\ref{deg4orth.prop}(3). As observed in the proof of
  Proposition~\ref{split.prop}(2), $(A,\sigma)$ contains $(Q',\binv)$,
  hence
  \[
  (A,\sigma)\simeq\Ad_{\pform{\lambda}}\otimes(Q',\binv)
  \qquad\text{for some $\lambda\in F^\times$}.
  \]
  The algebra $A$ carries a hyperbolic symplectic involution $\tau$,
  and we have by~\cite[(16.21)]{KMRT}
  \[
  \Delta_\tau(\sigma)=(\lambda)\cup [Q'].
  \]
  Since $(A,\sigma)_{F_Q}$ is hyperbolic, this invariant vanishes over
  $F_Q$. Hence, by~\cite[X(4.11)]{Lam} and Arason's common slot lemma
  we may assume as 
  in the proof of Lemma~\ref{deg4symp.lem} that $(\lambda)\cup
  [Q']=(\lambda)\cup [Q]$. Therefore, $\pform{\lambda}\cdot
  n_{Q'}\simeq\pform{\lambda}\cdot n_Q$, which shows that $(A,\sigma)$
  is as in case~(b).

  It is clear that $(A,\sigma)_{F_Q}$ is hyperbolic in case~(a). In
  case (b) the algebra $A$ carries a hyperbolic symplectic involution
  $\tau$ and $\Delta_\tau(\sigma)=(\lambda)\cup[Q']$ vanishes over
  $F_Q$. Therefore, $(A,\sigma)_{F_Q}$ is hyperbolic.
\end{proof}

\section{$F_Q$-minimal quadratic forms of dimension 5}
\label{min.sec}

A quadratic form $\varphi$ over $F$ is called \emph{$F_Q$-minimal} if
$\varphi_{F_Q}$ is isotropic and $\psi_{F_Q}$ is anisotropic for every
proper subform $\psi\subset\varphi$. In this section, we show that
Theorem~\ref{deg4symp.thm} can be used to recover (and is in fact
equivalent to) the description of $F_Q$-minimal forms of dimension~$5$
due to Hoffmann, Lewis, and Van Geel \cite[Prop.~4.1]{HLVG}.

A general procedure to construct central simple algebras with
involution that become hyperbolic over $F_Q$ uses Clifford
algebras. Recall from \cite[V(1.9), V(2.4)]{Lam} that for any
quadratic form $\varphi$ of odd dimension $2m+1$ over $F$ the even Clifford
algebra $C_0(\varphi)$ is central simple over $F$ of degree $2^m$. It
carries a canonical involution $\tau_0$, which is the restriction of
the involution on the full Clifford algebra that leaves invariant
every vector in the underlying vector space of $\varphi$. The involution
$\tau_0$ is orthogonal if $m\equiv0$ or $3\bmod4$ and symplectic
otherwise, see \cite[(8.4)]{KMRT}.

\begin{prop}
  \label{qf.prop}
  Let $\varphi$ be a quadratic form of odd dimension over $F$.
  \begin{enumerate}
  \item[(1)]
  If $\varphi_{F_Q}$ is isotropic, then $(C_0(\varphi),\tau_0)_{F_Q}$ is
  hyperbolic. The converse holds if $\dim\varphi=5$.
  \item[(2)]
  If $\varphi$ contains a subform similar to $\qform{1,-a,-b}$, then
  $(C_0(\varphi),\tau_0)$ contains $\Qc$. The converse holds if
  $\dim\varphi=5$. 
  \end{enumerate}
\end{prop}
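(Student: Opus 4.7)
My plan is to prove each of the four implications separately, using direct Clifford-algebra computation for the two forward directions, and the Clifford correspondence for degree-$4$ symplectic algebras (a manifestation of the exceptional isogeny $\mathrm{Spin}_5\simeq\mathrm{Sp}_4$) for the two converses.

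For the forward direction of~(1), I would invoke the classical fact (see~\cite[Ch.~8]{KMRT}) that if $\psi$ is an isotropic odd-dimensional quadratic form, then $(C_0(\psi),\tau_0)$ is hyperbolic; applying this to $\psi=\varphi_{F_Q}$ is the claim. For the forward direction of~(2), assume $\varphi$ contains the subform $\qform{\lambda,-\lambda a,-\lambda b}$, with associated anticommuting Clifford generators $e_1,e_2,e_3$ satisfying $e_1^2=\lambda$, $e_2^2=-\lambda a$, $e_3^2=-\lambda b$. Set $i'=e_1e_2$ and $j'=e_1e_3$ inside $C_0(\varphi)$. A direct computation yields $(i')^2=a$, $(j')^2=b$, $i'j'=-j'i'$, together with $\tau_0(i')=e_2e_1=-i'$ and $\tau_0(j')=-j'$. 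Hence $i',j'$ generate a $\tau_0$-stable subalgebra of $C_0(\varphi)$ isomorphic to $(a,b)_F=Q$, on which $\tau_0$ restricts to the canonical involution $\binv$; therefore $(C_0(\varphi),\tau_0)$ contains $\Qc$.

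For the converses in the case $\dim\varphi=5$, the central tool is the bijection (see~\cite[Ch.~15]{KMRT}) between similarity classes of non-degenerate $5$-dimensional quadratic forms over $F$ and isomorphism classes of degree-$4$ central simple $F$-algebras with symplectic involution, induced by $\varphi\mapsto(C_0(\varphi),\tau_0)$ and compatible with scalar extension. Under this bijection, isotropic $5$-dimensional forms correspond precisely to hyperbolic pairs: one direction is (1) forward, and the other follows from the fact that in degree~$4$ a symplectic involution of positive (necessarily even) index is automatically hyperbolic. This yields the converse of~(1) at once: if $(C_0(\varphi),\tau_0)_{F_Q}$ is hyperbolic, then $\varphi_{F_Q}$ is similar to an isotropic form, and hence isotropic.

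For the converse of~(2), suppose $(C_0(\varphi),\tau_0)\simeq\Qc\otimes(Q'',\rho)$ with $\rho$ orthogonal on $Q''=(c,d)_F$. My strategy is to construct explicitly a $5$-dimensional form $\varphi'=\qform{1,-a,-b,p,q}$ satisfying $(C_0(\varphi'),\tau_0)\simeq(C_0(\varphi),\tau_0)$; the bijection then forces $\varphi\sim\varphi'$, and $\varphi$ contains a subform similar to $\qform{1,-a,-b}$. Applying (2) forward to $\varphi'$ and then computing the centralizer of $Q=\langle i',j'\rangle$ inside $C_0(\varphi')$ gives
\[
(C_0(\varphi'),\tau_0)\simeq\Qc\otimes\bigl((-pq,abp)_F,\Int(F_1)\circ\binv\bigr),
\]
where $F_1$ is a pure-quaternion generator of the centralizer with $F_1^2=-pq$. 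Matching with $(Q'',\rho)$ reduces to the two conditions $pq\equiv-c\bmod F^{\times2}$ and $(c)\cup(abpd)=0$ in $H^2(F,\mu_2)$; the latter is a norm condition on $abpd$ for $F(\sqrt{c})/F$, which can be met by an appropriate choice of $p$, after which $q=-c/p$ completes the construction. The main obstacle I anticipate is setting up the bijection of the previous paragraph rigorously and verifying the centralizer computation and parameter matching in this last step; both rely on standard but somewhat delicate Clifford-algebra and quaternion-symbol manipulations.
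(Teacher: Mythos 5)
Your forward directions coincide with the paper's: (1) is the standard citation for the even Clifford algebra of an isotropic odd-dimensional form, and for (2) you and the paper both take products of pairs of orthogonal vectors from the $\qform{1,-a,-b}$-subform to produce a $\tau_0$-stable copy of $\Qc$ (a minor slip in your version: with $e_1^2=\lambda$, $e_2^2=-\lambda a$ one gets $(e_1e_2)^2=\lambda^2 a$, not $a$, but this is the same square class so the subalgebra is still $(a,b)_F$). Where you genuinely diverge is in the converse of (2). The paper's argument is intrinsic: it identifies the quadratic form $\varphi$ (up to similarity) with the squaring form $s\colon x\mapsto x^2$ on the $5$-dimensional space of $\tau_0$-symmetric, reduced-trace-zero elements via the $\mathsf{B}_2\equiv\mathsf{C}_2$ equivalence \cite[(15.16)]{KMRT}, and then simply observes that for a $\rho$-skew unit $y\in Q'$ the subspace $Q^0\otimes y$ is $\tau_0$-symmetric of trace zero, on which $s$ restricts to $y^2\qform{a,b,-ab}\sim\qform{1,-a,-b}$. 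Your route is to build a concrete $\varphi'=\qform{1,-a,-b,p,q}$, compute the centralizer of the visible copy of $Q$ in $C_0(\varphi')$ (your computation $(-pq,abp)_F$ with the involution of discriminant class $pq$ checks out), and solve the two matching conditions on $p,q$; this does work (e.g.\ $p=abd$, $q=-c/p$ after normalizing $\disc\rho\sim c$), but it is longer and obscures why the subform $\qform{1,-a,-b}$ must appear, whereas the paper's intrinsic argument makes it transparent and avoids the parameter chase entirely.

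There is also a small logical gap in your converse of (1). From the injectivity of $\varphi\mapsto(C_0(\varphi),\tau_0)$ on similarity classes you conclude that if $(C_0(\varphi),\tau_0)_{F_Q}$ is hyperbolic then ``$\varphi_{F_Q}$ is similar to an isotropic form''; but the bijection only tells you $\varphi_{F_Q}$ is \emph{the} form corresponding to that hyperbolic pair — you still need to exhibit an isotropic $5$-dimensional form whose associated pair is that same hyperbolic $(B,\sigma)$ before you can invoke injectivity. This can be done (taking $\qform{1,-1,1,-1,1}$ in the split case, $\qform{s,t,-st,1,-1}$ when $B\sim(s,t)_F$), but it is an extra step; the paper sidesteps it by citing \cite[(15.21)]{KMRT} directly, which is exactly the statement that anisotropy of $\varphi$ and of $(C_0(\varphi),\tau_0)$ are equivalent in dimension $5$.
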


\begin{proof}
  (1) The first statement readily follows from \cite[(8.5)]{KMRT} and
  the second from \cite[(15.21)]{KMRT}. 

  (2) Suppose the underlying vector space of $\varphi$ contains
  orthogonal vectors 
  $e_0$, $e_1$, $e_2$ satisfying for some $\lambda\in F^\times$
  \[
  \varphi(e_0)=\lambda,\qquad \varphi(e_1)=-\lambda a,\qquad
  \varphi(e_2)=-\lambda b. 
  \]
  Then the products $e_0e_1$ and $e_0e_2$ generate a $\tau_0$-stable
  subalgebra of $C_0(\varphi)$ isomorphic to $\Qc$.

  For the rest of the proof, suppose $\dim\varphi=5$ and
  $(C_0(\varphi),\tau_0)$ contains $\Qc$. The centralizer of $\Qc$ is
  a quaternion algebra with orthogonal involution $(Q',\rho)$ such
  that
  \[
  (C_0(\varphi),\tau_0)=\Qc\otimes(Q',\rho).
  \]
  Let $V\subset\Sym(C_0(\varphi),\tau_0)$ be the vector space of
  $\tau_0$-symmetric elements of trace~$0$. The map $x\mapsto x^2$
  defines a quadratic form $s\colon V\to F$ that is similar to
  $\varphi$ by the equivalence $\mathsf{B}_2\equiv\mathsf{C}_2$, see
  \cite[(15.16)]{KMRT}. Let $y\in Q'$ be a $\rho$-skew-symmetric unit
  and let $Q^0\subset Q$ be the vector space of pure quaternions. The
  restriction of $s$ to the subspace $Q^0\otimes y\subset V$ is
  similar to $\qform{1,-a,-b}$, hence the proof is complete.
\end{proof}

\begin{cor}[Hoffmann--Lewis--Van Geel {\cite[Prop.~4.1]{HLVG}}]
  \label{qf.cor}
  A $5$-dimensional quadratic form $\varphi$ over $F$ is $F_Q$-minimal
  if and only if the following conditions hold:
  \begin{enumerate}
  \item[(a)]
  $\varphi$ is similar to a Pfister neighbour of the $3$-fold Pfister
  form $\pform{a,b,\lambda}$ for some $\lambda\in F^\times$, and
  \item[(b)]
  $C_0(\varphi)\simeq M_2(Q')$ for some quaternion $F$-algebra $Q'$
  such that $Q\otimes_FQ'$ is a division algebra.
  \end{enumerate}
\end{cor}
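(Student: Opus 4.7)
The plan is to exploit the correspondence $\varphi \leftrightarrow (C_0(\varphi),\tau_0)$ between $5$-dimensional quadratic forms and degree-$4$ algebras with symplectic involution (the $B_2 \equiv C_2$ equivalence) in order to translate $F_Q$-minimality of $\varphi$ into algebraic conditions on $(C_0(\varphi),\tau_0)$ to which Theorem~\ref{deg4symp.thm} applies. Proposition~\ref{qf.prop} provides the required dictionary: part~(1) equates isotropy of $\varphi$ over $F_Q$ with hyperbolicity of $(C_0(\varphi),\tau_0)_{F_Q}$, and part~(2) equates the existence in $\varphi$ of a $3$-dimensional subform similar to $\qform{1,-a,-b}$ with containment of $\Qc$ in $(C_0(\varphi),\tau_0)$. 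Together with the standard fact that any anisotropic quadratic form of dimension $3$ or $4$ becoming isotropic over $F_Q$ must contain a subform similar to $\qform{1,-a,-b}$ (and noting that $2$-dimensional forms stay anisotropic over $F_Q$ since $F$ is quadratically closed in $F_Q$), $F_Q$-minimality of the (necessarily anisotropic) form $\varphi$ becomes equivalent to the conjunction ``$(C_0(\varphi),\tau_0)_{F_Q}$ is hyperbolic and $(C_0(\varphi),\tau_0)$ does not contain $\Qc$''.

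For the $(\Rightarrow)$ direction, Theorem~\ref{deg4symp.thm} then places $(C_0(\varphi),\tau_0)$ in case~(b): we have $(C_0(\varphi),\tau_0) \simeq \Ad_{\pform{\lambda}} \otimes (Q',\binv)$ with $Q\otimes_F Q'$ a division algebra and $\pform{\lambda}\cdot n_{Q'} \simeq \pform{\lambda}\cdot n_Q$, equivalently $\pform{\lambda,a',b'} \simeq \pform{\lambda,a,b}$. This yields condition~(b) of the corollary immediately. To extract condition~(a), I recover $\varphi$ up to similarity from $(C_0(\varphi),\tau_0)$ via the ``$x \mapsto x^2$'' construction used in the proof of Proposition~\ref{qf.prop}(2). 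Parametrizing trace-zero symmetric elements of $M_2(Q')$ for the involution $\ad_{\pform{\lambda}}\otimes\binv$ as
\[
M = \begin{pmatrix} \alpha & x \\ -\lambda^{-1}\overline{x} & -\alpha \end{pmatrix},\qquad \alpha\in F,\ x\in Q',
\]
a short matrix calculation yields $M^2 = \bigl(\alpha^2-\lambda^{-1}\Nrd_{Q'}(x)\bigr)\cdot I$, so the associated $5$-dimensional form is $\qform{1} \perp (-\lambda^{-1})\,n_{Q'}$; up to similarity this is $\qform{1,-\lambda,-a',-b',a'b'}$, visibly a subform of $\pform{\lambda,a',b'} = \pform{\lambda,a,b}$. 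Hence $\varphi$ is similar to a Pfister neighbour of $\pform{a,b,\lambda}$, establishing condition~(a).

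For the $(\Leftarrow)$ direction, assume (a) and (b). Condition~(a) gives $\varphi_{F_Q}$ isotropic because $\pform{a,b,\lambda} = \pform{\lambda}\otimes n_Q$ is hyperbolic over $F_Q$. Condition~(b) forces $C_0(\varphi)\otimes_F Q \simeq M_2(Q\otimes_F Q')$ to have Schur index $4$, so $(C_0(\varphi),\tau_0)$ lies in case~(b) of Theorem~\ref{deg4symp.thm}, which is mutually exclusive with case~(a); hence $(C_0(\varphi),\tau_0)$ does not contain $\Qc$, and by Proposition~\ref{qf.prop}(2) the form $\varphi$ has no $3$-dimensional subform similar to $\qform{1,-a,-b}$. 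The standard fact recalled above then rules out any proper subform of $\varphi$ becoming isotropic over $F_Q$; anisotropy of $\varphi$ over $F$ follows from that of $\pform{a,b,\lambda}$, which is implicit in~(a).

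The main obstacle is the explicit identification in the $(\Rightarrow)$ direction of the $5$-dimensional form attached to $(M_2(Q'),\ad_{\pform{\lambda}}\otimes\binv)$ as a Pfister neighbour of $\pform{a,b,\lambda}$: the matrix computation itself is routine, but it uses in an essential way the identity $\pform{\lambda,a',b'} \simeq \pform{\lambda,a,b}$ provided by Theorem~\ref{deg4symp.thm}(b) in order to rewrite the resulting $3$-fold Pfister form in terms of $a$ and $b$.
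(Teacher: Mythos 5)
Your argument follows the paper's own route: translate $F_Q$-minimality of $\varphi$ into ``$(C_0(\varphi),\tau_0)_{F_Q}$ hyperbolic and $(C_0(\varphi),\tau_0)$ does not contain $\Qc$'' via Proposition~\ref{qf.prop}, then feed this into Theorem~\ref{deg4symp.thm}. Two small differences in presentation: you make explicit the auxiliary fact (implicit in the paper's terse first sentence) that an anisotropic form of dimension $3$ or $4$ that becomes isotropic over $F_Q$ must contain a subform similar to $\qform{1,-a,-b}$, equivalently that there are no $F_Q$-minimal forms of dimension $4$ --- this is indeed in the Hoffmann--Lewis--Van Geel paper and is needed to pass from ``no subform $\sim\qform{1,-a,-b}$'' to ``no proper subform becomes isotropic over $F_Q$''; and where the paper simply cites \cite[p.~271]{KMRT} to identify the quadratic form attached to $\Ad_{\pform{\lambda}}\otimes(Q',\binv)$ up to similarity, you carry out the trace-zero, $\tau_0$-symmetric matrix computation directly, recovering $\qform{1}\perp(-\lambda^{-1})\,n_{Q'}$ and hence a $5$-dimensional neighbour of $\pform{a',b',\lambda}=\pform{a,b,\lambda}$. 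Both of these are legitimate fillings-in of the paper's outline rather than a different method.

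One caution on the last line of your converse direction: the anisotropy of $\pform{a,b,\lambda}$ is \emph{not} a formal consequence of condition~(a) unless one builds anisotropy of the ambient Pfister form into the definition of ``Pfister neighbour'' --- if $\lambda\in\Nrd(Q^\times)$ then $\pform{a,b,\lambda}$ is hyperbolic, its $5$-dimensional ``neighbours'' are the isotropic $5$-dimensional forms of Witt index $\geq 1$, and such a $\varphi$ can also satisfy~(b) while clearly failing to be $F_Q$-minimal. The paper's proof glosses over the same point, so it does not distinguish your proof from theirs, but as written the phrase ``implicit in~(a)'' papers over the convention being invoked; it would be cleaner either to state that Pfister neighbours are taken of anisotropic Pfister forms, or to observe that if $\varphi$ were isotropic then $(C_0(\varphi),\tau_0)$ would be hyperbolic over $F$ and one should rule this out from~(a) and~(b) directly.
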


\begin{proof}
  Proposition~\ref{qf.prop} shows that $\varphi$ is $F_Q$-minimal if
  and only if $(C_0(\varphi),\tau_0)$ is hyperbolic but
  $(C_0(\varphi),\tau_0)$ does not contain $\Qc$. By
  Theorem~\ref{deg4symp.thm}, this condition is equivalent to
  \begin{equation}
    \label{eq:qf}
    (C_0(\varphi),\tau_0)\simeq\Ad_{\pform{\lambda}}\otimes(Q',\binv)
  \end{equation}
  for some quaternion $F$-algebra $Q'=(a',b')_F$ and some $\lambda\in
  F^\times$ with $Q\otimes Q'$ a division algebra and
  $\pform{a,b,\lambda}\simeq\pform{a',b',\lambda}$. It follows from
  the isomorphism~\eqref{eq:qf} that $\varphi$ is similar to a Pfister
  neighbour of $\pform{a',b',\lambda}$, by \cite[p.~271]{KMRT}. Thus,
  (a) and (b) hold if $\varphi$ is $F_Q$-minimal. Conversely, if
  $C_0(\varphi)\simeq M_2(Q')$ for some quaternion $F$-algebra
  $Q'=(a',b')_F$, then as observed in the proof of
  Proposition~\ref{split.prop}(2) we have
  \[
  (C_0(\varphi),\tau_0)\simeq\Ad_{\pform{\mu}}\otimes(Q',\binv)
  \qquad\text{for some $\mu\in F^\times$}.
  \]
  It then follows from \cite[p.~271]{KMRT} that $\varphi$ is similar
  to a Pfister neighbour of $\pform{a',b',\mu}$. If~(a) holds, then
  $\pform{a,b,\lambda}\simeq\pform{a',b',\mu}$ and by the common slot
  lemma we may assume $\lambda=\mu$. Thus, $\varphi$ is $F_Q$-minimal
  if~(a) and (b) hold.
\end{proof}

\section{Totally decomposable orthogonal involutions of degree 8}
\label{totdec.sec}

In this section, $A$ denotes a central simple $F$-algebra of
degree~$8$ and $\sigma$ is an orthogonal involution on $A$. The
algebra with involution $(A,\sigma)$ is called \emph{totally
  decomposable} if there are $\sigma$-stable quaternion subalgebras
$Q_1$, $Q_2$, $Q_3$ in $A$ such that $A=Q_1\otimes Q_2\otimes
Q_3$. Denoting by $\sigma_i$ the restriction of $\sigma$ to $Q_i$ for
$i=1$, $2$, $3$, we then have
\[
(A,\sigma)=(Q_1,\sigma_1)\otimes(Q_2,\sigma_2)\otimes(Q_3,\sigma_3).
\]
The totally decomposable algebras of degree~$8$ are characterized by
the property that $\disc\sigma=1$ and one of the components of the
Clifford algebra $C(A,\sigma)$ is split, see \cite[(42.11)]{KMRT}. 
Proposition~\ref{totdec.prop} below shows how to use this criterion to
relate totally decomposable algebras to quadratic forms. 

Recall that for any quadratic form $\varphi$ of dimension~$8$ with
$\disc\varphi=1$ the even Clifford algebra decomposes into a direct
product of central simple $F$-algebras of degree~$8$,
\[
C_0(\varphi)\simeq C_+(\varphi)\times C_-(\varphi).
\]
The canonical involution $\tau_0$ on $C_0(\varphi)$ restricts to
orthogonal involutions $\tau_+$, $\tau_-$ on $C_+(\varphi)$ and
$C_-(\varphi)$, and we have
\[
(C_+(\varphi),\tau_+)\simeq(C_-(\varphi),\tau_-).
\]
It is easily checked that $(C_+(\varphi),\tau_+)$ is totally
decomposable.

\begin{prop}
  \label{totdec.prop}
  For every central simple algebra of degree~$8$ with totally
  decomposable orthogonal involution $(A,\sigma)$, there is a
  quadratic form $\varphi$ with $\dim\varphi=8$ and $\disc\sigma=1$
  such that
  \[
  (A,\sigma)\simeq(C_+(\varphi),\tau_+).
  \]
  The form $\varphi$ is uniquely determined by $(A,\sigma)$ up to
  similarity. Moreover,
  \begin{itemize}
  \item[--]
  the algebra $A$ is split if and only if $\varphi$ is a
  multiple of a $3$-fold Pfister form; in that case
  $(A,\sigma)\simeq\Ad_\varphi$;
  \item[--]
  the algebra $(A,\sigma)$ contains
  $\Qc$ if and only if $\varphi$ contains a subform similar to
  $\qform{1,-a,-b}$.
  \end{itemize}
  Furthermore, the following conditions are
  equivalent:
  \begin{enumerate}
  \item[(a)]
  $(A,\sigma)$ is isotropic;
  \item[(b)]
  $(A,\sigma)$ is hyperbolic;
  \item[(c)]
  $\varphi$ is isotropic.
  \end{enumerate}
\end{prop}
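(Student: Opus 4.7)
The plan is to attach to $(A,\sigma)$ an $8$-dimensional quadratic form $\varphi$ of trivial discriminant via the split component of its Clifford algebra, and to read off every assertion of the proposition from the triality triple $\bigl(\Ad_\varphi,(C_+(\varphi),\tau_+),(C_-(\varphi),\tau_-)\bigr)$, as developed in \cite[\S\S35, 42]{KMRT}. By the criterion \cite[(42.11)]{KMRT}, total decomposability gives $\disc\sigma=1$ together with a split component of $C(A,\sigma)$, so after renaming we may write $C_+(A,\sigma)\simeq\End_FV$ with $\dim V=8$ and $\sigma_+=\ad_\varphi$ for a nondegenerate form $\varphi$ on~$V$; the discriminant of $\varphi$ is trivial because the canonical involution of a Clifford algebra is (see \cite[\S8]{KMRT}). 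Triality for $D_4$ organises the three $8$-dimensional algebras with involution so that each appears as the split Clifford component of the other two, whence $(A,\sigma)\simeq(C_+(\varphi),\tau_+)$. The same correspondence applied in reverse gives uniqueness up to similarity: an isomorphism $(C_+(\varphi),\tau_+)\simeq(C_+(\varphi'),\tau_+)$ recovers $\Ad_\varphi\simeq\Ad_{\varphi'}$, hence $\varphi\sim\varphi'$.

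Next I would read off the bulleted claims from the triple. Since $\varphi\in I^2F$, the Brauer class $[C_+(\varphi)]$ coincides with the Clifford invariant $e_2(\varphi)$; thus $A$ is split iff $\varphi\in I^3F$, iff $\varphi$ is similar to a $3$-fold Pfister form by the Arason--Pfister Hauptsatz. When $\varphi$ is such a Pfister form, triality forces all three vertices of the triple to be split orthogonal algebras whose underlying forms are pairwise similar, yielding $(A,\sigma)\simeq\Ad_\varphi$. For the $\Qc$-containment, the ``if'' direction is constructive: writing $\varphi=\lambda\qform{1,-a,-b}\perp\psi$ and picking orthogonal vectors $e_0,e_1,e_2$ realising the subform, the products $e_0e_1,e_0e_2\in C_0(\varphi)$ generate a $\underline{\sigma}$-stable copy of $\Qc$ exactly as in the proof of Proposition~\ref{qf.prop}(2), and projecting onto the factor $C_+(\varphi)$ of $C_0(\varphi)=C_+(\varphi)\times C_-(\varphi)$ yields a $\tau_+$-stable subalgebra isomorphic to $\Qc$ by simplicity. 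The converse starts from a decomposition $(A,\sigma)\simeq\Qc\otimes(B,\beta)$ and tracks this factorisation through the Clifford algebra and the triality identification to produce the required $3$-dimensional subform.

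For the final equivalence, (b)$\Rightarrow$(a) is trivial; (c)$\Rightarrow$(b) follows from the standard formula \cite[(8.5)]{KMRT} for the Clifford algebra of $\H\perp\varphi_0$, which makes both $(C_\pm(\varphi),\tau_\pm)$ hyperbolic; and (a)$\Rightarrow$(c) uses that triality preserves the Witt index across the three vertices of the triple, so isotropy of $(A,\sigma)$ forces isotropy of $\varphi$. The step I expect to be the main obstacle is the converse direction of the $\Qc$-containment: unlike the forward direction, which can be verified by explicit generators inside $C_0(\varphi)$, the reverse passage from an abstract decomposition $\Qc\otimes(B,\beta)$ back to an explicit $3$-dimensional subform of $\varphi$ requires careful Clifford-algebra bookkeeping through the triality correspondence, and is the place where the non-triviality of this proposition really lies.
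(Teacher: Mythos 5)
Your framework — identify $(A,\sigma)$ with $(C_+(\varphi),\tau_+)$ via the split Clifford component, then read the claims off the triality triple — is the same as the paper's, and the existence, uniqueness, and split-criterion arguments are essentially right. But two of the harder assertions are not actually proved.

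For the converse of the $\Qc$-containment criterion you only sketch a plan (``track the factorisation through the Clifford algebra and triality'') and explicitly concede you have not executed it; this is a real gap, not a routine verification. The paper's argument is short but uses a specific input you do not name: from $(A,\sigma)\simeq\Qc\otimes(A_1,\sigma_1)$ with $(A_1,\sigma_1)$ of degree $4$ and symplectic, one invokes the exceptional isomorphism $\mathsf{B}_2\equiv\mathsf{C}_2$ in the form \cite[(15.19)]{KMRT} to realise $(A_1,\sigma_1)\simeq(C_0(\psi),\tau_0)$ for a $5$-dimensional form $\psi$ with $\disc\psi=1$, rewrites $\Qc$ as $(C_0(\qform{ab,-a,-b}),\tau'_0)$, and then uses the canonical involution-compatible embedding $C_0(\qform{ab,-a,-b})\otimes_F C_0(\psi)\hookrightarrow C_0(\qform{ab,-a,-b}\perp\psi)$ to get $(A,\sigma)\simeq(C_+(\qform{ab,-a,-b}\perp\psi),\tau_+)$; uniqueness of $\varphi$ then produces the $3$-dimensional subform.

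Your argument for (a)$\Rightarrow$(c) is based on a false principle. Triality does not preserve Witt index or isotropy across the three vertices of a trialitarian triple — the whole phenomenon studied in \S\ref{deg8.sec} (Case~2) is that $(A,\sigma)$ can be anisotropic while a Clifford component is isotropic, and Garibaldi's theorem in \cite{G:clif} only gives one implication. What the paper actually uses for (a)$\Leftrightarrow$(b) is \cite[Prop.~2.10]{BFPQM}: a totally decomposable orthogonal involution of degree~$8$ is isotropic if and only if it is hyperbolic. This is a nontrivial structural result about Pfister involutions, not a formal consequence of triality; without it (or an equivalent substitute) you have no valid derivation of (a)$\Rightarrow$(c). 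Once (a)$\Leftrightarrow$(b) is in hand, (b)$\Rightarrow$(c) is Garibaldi and (c)$\Rightarrow$(b) is \cite[(8.5)]{KMRT}, as you say.

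A minor point: your stated reason for $\disc\varphi=1$ (``the canonical involution of a Clifford algebra is'') is not a fact from \cite[\S8]{KMRT}. The clean route, as in the paper, is to deduce $\disc\varphi=1$ from the triality identity $(C_0(\varphi),\tau_0)\simeq(A,\sigma)\times(A,\sigma)$, since $C_0(\varphi)$ decomposes into two factors precisely when $\disc\varphi$ is trivial.
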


\begin{proof}
  Since $(A,\sigma)$ is totally decomposable, it follows from
  \cite[(42.11)]{KMRT} that one of the components $C_+(A,\sigma)$ of the
  Clifford algebra is split. The canonical involution $\sigma_+$ on
  $C_+(A,\sigma)$ is orthogonal, hence there is a quadratic form
  $\varphi$ of dimension~$8$ such that
  \[
  (C_+(A,\sigma),\sigma_+)\simeq\Ad_\varphi.
  \]
  By triality (see \cite[(42.3)]{KMRT}) we have
  \[
  (C_0(\varphi),\tau_0)\simeq(A,\sigma)\times(A,\sigma).
  \]
  Therefore, $\disc\varphi=1$ and $(A,\sigma)\simeq
  (C_+(\varphi),\tau_+)$. Conversely, triality also shows that if
  $(A,\sigma)\simeq(C_+(\varphi),\tau_+)$, then the canonical
  involution $\underline{\sigma}$ on $C(A,\sigma)$ satisfies
  \[
  (C(A,\sigma),\underline{\sigma})\simeq\Ad_\varphi\times(A,\sigma),
  \]
  hence the form $\varphi$ is uniquely determined up to similarity.

  The Clifford algebra of $\varphi$ splits if and only if $\varphi$ is
  a multiple of a $3$-fold Pfister form, by~\cite[Ch.~2,
  Th.~14.4]{Scha} and \cite[X(5.6)]{Lam}. When that condition
  holds we have $(A,\sigma)\simeq\Ad_\varphi$ by \cite[(35.1)]{KMRT}.

  If $\varphi$ contains a multiple of $\qform{1,-a,-b}$, then the same
  argument as in the proof of Proposition~\ref{qf.prop}(2) shows that
  $(C_0(\varphi),\tau_0)$ contains $\Qc$. Projecting on each component, it
  follows that $(C_+(\varphi),\tau_+)$ contains $\Qc$. Conversely, if
  $(A,\sigma)$ contains $\Qc$, then we have
  \begin{equation}
    \label{eq:2}
    (A,\sigma)=\Qc\otimes(A_1,\sigma_1)
  \end{equation}
  for some central simple algebra with symplectic involution
  $(A_1,\sigma_1)$ of degree~$4$. By \cite[(15.19)]{KMRT} there is a
  $5$-dimensional quadratic form $\psi$ such that $\disc\psi=1$ and
  $(A_1,\sigma_1)\simeq(C_0(\psi),\tau_0)$. Letting $\tau'_0$ be the
  canonical involution on $C_0(\qform{ab,-a,-b})$, we may rewrite
  \eqref{eq:2} as
  \begin{equation}
    \label{eq:1}
    (A,\sigma)\simeq(C_0(\qform{ab,-a,-b}),\tau'_0)\otimes
    (C_0(\psi),\tau_0). 
  \end{equation}
  In view of the canonical embedding
  \[
  C_0(\qform{ab,-a,-b})\otimes_FC_0(\psi)\hookrightarrow
  C_0(\qform{ab,-a,-b}\perp\psi), 
  \]
  which is compatible with the canonical involutions, \eqref{eq:1}
  yields
  \[
  (A,\sigma)\simeq(C_+(\qform{ab,-a,-b}\perp\psi),\tau_+).
  \]
  Uniqueness of $\varphi$ shows that $\varphi$ is similar to
  $\qform{ab,-a,-b}\perp\psi$, hence it contains a subform similar to
  $\qform{1,-a,-b}$. 

  The equivalence of (a), (b), (c) is clear if $A$ is split, since
  then $(A,\sigma)\simeq\Ad_\varphi$ and $\varphi$ is a $3$-fold
  Pfister form, hence it is isotropic if and only if it is
  hyperbolic. For the rest of the proof, we may thus assume $A$ is not
  split. If $(A,\sigma)$ is hyperbolic, then it follows from
  \cite{G:clif} that the split component of
  $(C(A,\sigma),\underline{\sigma})$ is isotropic, hence
  (b)$\Rightarrow$(c). Conversely, if (c) holds, then
  \cite[(8.5)]{KMRT} shows that $(C_0(\varphi),\tau_0)$ is hyperbolic,
  hence $(C_+(\varphi),\tau_+)$ also is hyperbolic, proving
  (c)$\Rightarrow$(b). The equivalence of (a) and (b) readily follows
  from \cite[Prop.~2.10]{BFPQM}. 
\end{proof}

To give an example of a division $F$-algebra of degree~$8$ with a totally
decomposable orthogonal involution that is hyperbolic over $F_Q$ but
does not contain $\Qc$, we use an example of $F_Q$-minimal quadratic
form of dimension~$7$ due to Hoffmann and Van Geel \cite{HVG}. For the
rest of this section, we fix the following notation: $F_1=F_0(t,u)$ is
the function field in two independent indeterminates over an arbitrary
field $F_0$ of characteristic different from~$2$, and
$F=F_1((a))((b))$ is the iterated Laurent series field in two
indeterminates $a$, $b$. In accordance with our running notation, $Q$
denotes the quaternion algebra $(a,b)_F$. Let
\[
\varphi_0=\qform{1+t,u}\perp\qform{-a}\qform{1,u}
\perp\qform{-b}\qform{1,t+u}\perp\qform{ab}\qform{t}
\]
(see \cite[p.~43]{HVG}) and
\[
\varphi=\varphi_0\perp\qform{ab}\qform{t(1+t)(t+u)},
\]
so $\dim\varphi=8$ and $\disc\varphi=1$. Let also
\[
(A,\sigma)=(C_+(\varphi),\tau_+)\quad(=(C_0(\varphi_0),\tau_0)),
\]
a central simple $F$-algebra of degree~$8$ with a totally decomposable
involution.

\begin{thm}
  \label{totdec.thm}
  The algebra with involution $(A,\sigma)$ does not contain $\Qc$, yet
  $(A,\sigma)_{F_Q}$ is hyperbolic. Moreover, $A$ is a division algebra.
\end{thm}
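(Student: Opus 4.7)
I combine Hoffmann--Van Geel's properties of the $7$-dimensional form $\varphi_0$ with Springer-style residue analysis on the iterated Laurent series field $F=F_1((a))((b))$. The four $(a,b)$-adic residue forms of $\varphi=\varphi_0\perp\qform{abt(1+t)(t+u)}$ over $F_1=F_0(t,u)$ are readily computed to be
\[
\varphi^{(0,0)}=\qform{1+t,u},\ \varphi^{(1,0)}=\qform{-1,-u},\ \varphi^{(0,1)}=\qform{-1,-(t+u)},\ \varphi^{(1,1)}=\qform{t,t(1+t)(t+u)},
\]
each anisotropic over $F_1$ by an immediate check with the $u$- or $t$-adic valuations; iterated Springer then shows that $\varphi$ is anisotropic over $F$ and that the $\varphi^{(i,j)}$ are invariants of its isomorphism class. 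For the hyperbolicity over $F_Q$: $\varphi$ contains the Hoffmann--Van Geel form $\varphi_0$, which by \cite{HVG} becomes isotropic over $F_Q$, so $\varphi_{F_Q}$ is isotropic and Proposition~\ref{totdec.prop} yields that $(A,\sigma)_{F_Q}$ is hyperbolic.

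By Proposition~\ref{totdec.prop}, the non-containment of $\Qc$ is equivalent to $\varphi$ having no subform similar to $\qform{1,-a,-b}$. Suppose for contradiction that $\varphi\simeq\mu\qform{1,-a,-b}\perp\psi$ for some $\mu\in F^\times$ and some $5$-dimensional $\psi$, and write $\mu=a^ib^j\bar\mu$ with $\bar\mu\in F_1^\times$ and $(i,j)\in\{0,1\}^2$. In each of the four resulting cases, equating residues on both sides forces $\pm\bar\mu$ to be simultaneously represented over $F_1$ by three of the four forms $\varphi^{(i,j)}$ above. In the cases $(i,j)\in\{(0,0),(1,0)\}$, the pair $\qform{1+t,u}$ and $\qform{1,u}$ are both among the forms required to represent $\pm\bar\mu$, and a $u$-adic residue comparison on $F_1=F_0(t)(u)$ then forces $1+t$ to be a square in $F_0(t)$, which fails. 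The two remaining cases $(0,1)$ and $(1,1)$ are ruled out by analogous valuation-theoretic arguments combining $v_u$ and $v_t$ on $F_1$.

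For $A$ to be a division algebra, I would compute the Brauer class $[A]=c(\varphi_0)\in H^2(F,\mu_2)$ from the diagonalization of $\varphi_0$ as a sum of classes of quaternion algebras $(a_i,a_j)_F$, and then read off its image under the $(a,b)$-adic residue decomposition
\[
H^2(F,\mu_2)\longrightarrow H^2(F_1,\mu_2)\oplus H^1(F_1,\mu_2)^{\oplus 2}\oplus\mathbb{Z}/2
\]
induced by the two uniformizers $a,b$ of $F$. Non-triviality of enough of these residue components forces the Schur index of $A$ to equal $\deg A=8$, so that $A$ is a division algebra.

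The main obstacle is the case analysis in the non-containment argument: the cases $(0,0)$ and $(1,0)$ fall immediately once $1+t$ is recognised as a non-square in $F_0(t)$, but the cases $(0,1)$ and $(1,1)$ involve the less symmetric triple $\qform{1+t,u}$, $\qform{1,t+u}$ and $\qform{t,t(1+t)(t+u)}$ over $F_1$, whose common value sets must be analysed with both $v_u$ and $v_t$ in tandem. Conceptually this step is a one-dimension-higher refinement of Hoffmann--Van Geel's proof that $\varphi_0$ contains no Pfister neighbour of $n_Q$, with extra bookkeeping to accommodate the eighth diagonal entry $\qform{abt(1+t)(t+u)}$ that trivialises the discriminant of $\varphi$.
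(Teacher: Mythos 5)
Your overall strategy --- hyperbolicity from the Hoffmann--Van Geel form, non-containment via a residue decomposition of $\varphi$ into the four forms over $F_1$, and divisibility via Brauer-group residues --- matches the paper's in outline, but two of the three parts have real gaps.

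For the hyperbolicity, citing \cite{HVG} for the isotropy of $(\varphi_0)_{F_Q}$ is acceptable; the paper instead exhibits the explicit isometry $\varphi_0\perp\qform{t(1+t),-at,-btu(t+u),ab,abu}\simeq\qform{1,t,u}\pform{a,b}$ so that the argument is self-contained, but either route works.

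For the non-containment, your proposed contradiction in cases $(0,0)$ and $(1,0)$ --- that $\alpha=\qform{1+t,u}$ and $\beta=\qform{1,u}$ representing a common value over $F_1$ forces $1+t$ to be a square --- is not correct as stated: $\alpha$ and $\beta$ \emph{do} have a common value, namely the square class of $u$, which is precisely what \cite[Lemma~4.4(iii)]{HVG} (cited in the paper) asserts. Your $u$-adic residue comparison only detects the sub-case where $v_u(\bar\mu)$ is even; the crucial sub-case $v_u(\bar\mu)$ odd, giving $\bar\mu\equiv u$, is missed, and there the contradiction must come from showing that the \emph{third} form in the triple ($\gamma$ in case $(0,0)$, $\delta$ in case $(1,0)$) does not represent $u$. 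The paper checks this over $F_0((t))((u))$, where both $\gamma$ and $\delta$ reduce to $\qform{1,t}$, which fails to represent $u$. Likewise the cases $(0,1)$ and $(1,1)$, in which $\gamma$ and $\delta$ both participate, are asserted without proof; the paper handles them by a separate extension to $F_0(t)((t+u))$, where the square classes represented by $\gamma$ are $1$ and $t+u$ while those represented by $\delta$ are $t$ and $t(1+t)(t+u)$, and these do not overlap.

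For the division claim, the proposal is only a sketch, and its guiding principle is not sound: ``non-triviality of enough residue components forces index $8$'' is not a theorem --- the residue homomorphism detects ramification but does not by itself bound the index of the unramified part from below. What the paper actually does is delicate: it computes $[A]$ from the isometry above, rewrites $A\simeq(-u,t)_F\otimes(a(t+u),u(1+t)(t+u))_F\otimes(b,1+t)_F$, and then peels off one quaternion factor per uniformizer ($b$ first, then $a(t+u)$), reducing to showing that $(-u,t)_{F_1}$ stays division after the field extension $F_1(\sqrt{1+t},\sqrt{u(1+t)(t+u)})$ --- which holds because $1+t$ and $u(t+u)$ become squares in $F_0(u)((t))$, so the extension embeds in $F_0(u)((t))$ and $(-u,t)$ remains division there. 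This stepwise index reduction is what you would need to carry out; a bare residue computation of $[A]$ does not substitute for it.
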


\begin{proof}
  Since $\qform{1,t}\simeq\qform{1+t,t(1+t)}$ and
  $\qform{t,u}\simeq\qform{t+u,tu(t+u)}$, we have
  \begin{equation}
    \label{eq:3}
    \varphi_0\perp \qform{t(1+t),-at,-btu(t+u), ab, abu} \simeq
    \qform{1,t,u}\pform{a,b}. 
  \end{equation}
  Since the right side is hyperbolic over $F_Q$, it follows that
  $(\varphi_0)_{F_Q}$ is isotropic, and therefore $(A,\sigma)_{F_Q}$
  is hyperbolic by Proposition~\ref{qf.prop}(1) or \ref{totdec.prop}.

  To show $(A,\sigma)$ does not contain $\Qc$, we prove $\varphi$ does
  not contain any subform similar to $\qform{1,-a,-b}$. As
  in~\cite[p.~43]{HVG}, we decompose $\varphi$ as 
  \begin{equation}\label{decqf} 
  \varphi=\alpha\perp\qform{-a}\beta\perp\qform{-b}\gamma
  \perp\qform{ab}\delta, 
  \end{equation}
  where $\alpha=\qform{1+t,u}$, $\beta=\qform{1,u}$,
  $\gamma=\qform{1,t+u}$, and $\delta=\qform{t,t(1+t)(t+u)}$. By
  Springer's theorem, the isometry classes of $\alpha$, $\beta$,
  $\gamma$, and $\delta$ over $F_1$ are uniquely determined by
  $\varphi$. If three of those quadratic forms represent a common
  value $\lambda$, then $\varphi$ contains a $3$-dimensional subform
  of $\qform{\lambda}\pform{a,b}$, hence a subform similar to
  $\qform{1,-a,-b}$. We claim that the converse also holds. Indeed,
  assume first that $\varphi$ contains
  $\qform{\lambda}\qform{1,-a,-b}$ for some $\lambda\in
  F_1^\times$. Writing 
  $\varphi=\qform{\lambda}\qform{1,-a,-b}\perp\varphi'$ as
  in~\eqref{decqf}, we get, by uniqueness of the forms $\alpha$,
  $\beta$ and $\gamma$ up to isometry, that all three represent
  $\lambda$. Consider now the general situation, where $\lambda\in
  F^\times$ need not be in $F_1$; modifying it by a square if
  necessary, we may write it as $\lambda_0$, $-a\lambda_0$,
  $-b\lambda_0$ or $ab\lambda_0$ for some $\lambda_0\in
  F_1^\times$. The same argument as above then proves the claim.  
 
  Thus, to prove that $\varphi$ does not contain any subform similar
  to $\qform{1,-a,-b}$, we have to show that no three of the
  quadratic forms $\alpha$, $\beta$, $\gamma$, and $\delta$ have any
  common value over $F_1$. This can be checked after some scalar
  extension. For instance, by~\cite[Lemma(4.4)(iii)]{HVG}, the only
  common value of $\alpha$ and $\beta$ over $F_0(t)((u))$ is the
  square class of $u$. On the other hand,
  applying~\cite[VI(1.3)]{Lam}, one may check that $\gamma$ and
  $\delta$ are 
  both isomorphic over $F_0((t))((u))$ to $\qform{1,t}$, which does
  not represent $u$. On the other hand, over $F_0(t)((t+u))$ the form
  $\gamma$ only represents the square classes of $1$ and $t+u$,
  whereas $\delta$ only represents the square classes of $t$ and
  $t(1+t)(t+u)$. Therefore, $\gamma$ and $\delta$ have no common value
  in $F_1$.

  To complete the proof, we show $A$ is a division algebra. Taking the
  Clifford invariant of each side of \eqref{eq:3} and applying
  \cite[V(3.13)]{Lam}, we obtain the following equality in the Brauer
  group of $F$:
  \[
  [A]+[C_0(\qform{t(1+t), -at, -btu(t+u), ab, abu})]=[Q].
  \]
  The even Clifford algebra of the $5$-dimensional form is easily
  computed:
  \[
  C_0(\qform{t(1+t), -at, -btu(t+u), ab, abu})\simeq (-u,bt)_F\otimes
  (ab(t+u),-au(1+t))_F,
  \]
  hence
  \begin{align*}
    A & \simeq (a,b)_F\otimes (-u,bt)_F\otimes (ab(t+u),-au(1+t))_F\\
      & \simeq (-u,t)_F\otimes (a(t+u),u(1+t)(t+u))_F \otimes (b,1+t)_F.
  \end{align*}
  Since $b$ is a uniformizing parameter for the $b$-adic valuation on
  $F$, it follows from \cite[\S19.6, Prop.]{Pierce} that the right
  side is a division algebra 
  if (and only if) the algebra
  \[
  B=(-u,t)_{F_1((a))}\otimes (a(t+u),u(1+t)(t+u))_{F_1((a))}\otimes
  F_1\bigl(\sqrt{1+t}\bigr)((a))
  \]
  is division. (Alternatively, one may view $A$ as a ring of twisted
  Laurent series over $B$ in an indeterminate whose square is $b$.)
  Now, $a(t+u)$ is a uniformizing parameter for the 
  $a$-adic valuation on $F_1\bigl(\sqrt{1+t}\bigr)((a))$, hence the same
  argument shows that $B$ is a division algebra if (and only if) the
  algebra
  \[
  C=(-u,t)_{F_1}\otimes F_1\bigl(\sqrt{1+t}, \sqrt{u(1+t)(t+u)}\bigr)
  \]
  is division. Since $1+t$ and $u(t+u)$ are squares in $F_0(u)((t))$,
  we may embed $C$ in the quaternion algebra $(-u,t)_{F_0(u)((t))}$,
  which is clearly division. Therefore, $A$ is a division algebra.
\end{proof}

\section{Non-totally decomposable orthogonal involutions of degree 8}
\label{deg8.sec}

In this section, we consider the case of central simple algebras with
orthogonal involution $(A,\sigma)$ of degree~$8$ that are not totally
decomposable. These algebras do not contain any quaternion algebra
with canonical involution $(H,\binv)$, since the centralizer of $H$
would be an algebra of degree~$4$ with symplectic involution, hence
decomposable by \cite[(16.16)]{KMRT}; the algebra $(A,\sigma)$ would then be
totally decomposable.

We start with a couple of lemmas of independent interest related to
triality.  
Let $Q_1$, $Q_2$, $Q_3$ be quaternion $F$-algebras such that
$Q_1\otimes_FQ_2\otimes_FQ_3$ is split. By a well-known result due to
Albert and to Pfister, this condition implies that $Q_1$, $Q_2$, and
$Q_3$ have a common maximal subfield,
see~\cite[(16.30)]{KMRT}. Therefore, we may write 
\[
Q_1=(c,d_1)_F,\qquad Q_2=(c,d_2)_F,\qquad Q_3=(c,d_3)_F
\]
for some $c$, $d_1$, $d_2$, $d_3\in F^\times$ such that the quaternion
algebra $(c,d_1d_2d_3)_F$ is split. For $\alpha=1$, $2$, $3$, let
$\rho_\alpha$ be the orthogonal involution on $Q_\alpha$ with
$\disc\rho_\alpha=c$. The involution $\rho_\alpha$ is uniquely
determined up to conjugation by \cite[(7.4)]{KMRT}.

\begin{lem}
  \label{quat.lem}
  For $\{\alpha,\beta,\gamma\}=\{1,2,3\}$ we have
  \[
  (Q_\alpha,\binv)\otimes(Q_\beta,\binv)\simeq
  \Ad_{\pform{d_\alpha}}\otimes(Q_\gamma,\rho_\gamma) \simeq
  \Ad_{\pform{d_\beta}}\otimes(Q_\gamma,\rho_\gamma).
  \]
\end{lem}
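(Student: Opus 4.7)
The plan is to reduce the claim by symmetry and then exhibit an explicit tensor decomposition via the double centralizer theorem~\cite[(1.5)]{KMRT}. Under the swap $\alpha\leftrightarrow\beta$, the left-hand side is fixed while the two displayed right-hand sides are interchanged, so it suffices to prove the single isomorphism $(Q_\alpha,\binv)\otimes(Q_\beta,\binv)\simeq\Ad_{\pform{d_\alpha}}\otimes(Q_\gamma,\rho_\gamma)$. To this end, I would construct a $\binv\otimes\binv$-stable copy of $(Q_\gamma,\rho_\gamma)$ inside $(Q_\alpha,\binv)\otimes(Q_\beta,\binv)$ and identify the centralizer with $\Ad_{\pform{d_\alpha}}$.

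Concretely, fix standard generators $Q_\alpha=F\langle i_\alpha,j_\alpha\rangle$ with $i_\alpha^2=c$, $j_\alpha^2=d_\alpha$, $j_\alpha i_\alpha=-i_\alpha j_\alpha$ (analogously for $Q_\beta,Q_\gamma$), arrange $\rho_\gamma=\Int(i_\gamma)\circ\binv$, and set $I=1\otimes i_\beta$, $J=j_\alpha\otimes j_\beta$. These satisfy $I^2=c$, $J^2=d_\alpha d_\beta$, $IJ=-JI$, so $F\langle I,J\rangle\simeq(c,d_\alpha d_\beta)_F$; the splitting of $(c,d_1d_2d_3)_F$ forces $(c,d_\alpha d_\beta)_F\simeq(c,d_\gamma)_F=Q_\gamma$. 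The restriction of $\binv\otimes\binv$ sends $I\mapsto-I$ and $J\mapsto J$, so it equals $\Int(I)\circ\binv$, an orthogonal involution on $F\langle I,J\rangle$ of discriminant $I^2=c=\disc\rho_\gamma$; by uniqueness of orthogonal involutions of prescribed discriminant on a quaternion algebra~\cite[(7.4)]{KMRT}, this restriction is conjugate to $\rho_\gamma$.

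The double centralizer theorem then produces a decomposition $(Q_\alpha,\binv)\otimes(Q_\beta,\binv)\simeq(Q_\gamma,\rho_\gamma)\otimes(C,\sigma|_C)$, and the Brauer relation $[Q_\alpha]+[Q_\beta]=[Q_\gamma]$ forces $[C]=0$, so $C\simeq M_2(F)$. A direct calculation will give $C$ as the $F$-span of $\{1,\ j_\alpha\otimes 1,\ i_\alpha\otimes i_\beta,\ i_\alpha j_\alpha\otimes i_\beta\}$, and $\binv\otimes\binv$ fixes three basis vectors while negating $j_\alpha\otimes 1$. Rescaling $i_\alpha\otimes i_\beta$ to the square-$1$ element $c^{-1}(i_\alpha\otimes i_\beta)$ gives a quaternion presentation of $C$ in which the restriction $\sigma|_C$ differs from the canonical involution $\binv_C$ by $\Int(j_\alpha\otimes 1)$, so $\sigma|_C=\Int(j_\alpha\otimes 1)\circ\binv_C$ is orthogonal of discriminant $(j_\alpha\otimes 1)^2=d_\alpha=\disc\pform{d_\alpha}$. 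Since orthogonal involutions on $M_2(F)$ are classified up to conjugation by their discriminant, this identifies $(C,\sigma|_C)\simeq\Ad_{\pform{d_\alpha}}$, completing the argument. The delicate point is the choice of embedding: the seemingly symmetric alternative $I=i_\alpha\otimes 1$, $J=j_\alpha\otimes j_\beta$ would instead yield a centralizer of discriminant $d_\beta$, giving directly the second isomorphism $(Q_\alpha,\binv)\otimes(Q_\beta,\binv)\simeq\Ad_{\pform{d_\beta}}\otimes(Q_\gamma,\rho_\gamma)$, so one must select the appropriate embedding for each of the two isomorphisms in the chain.
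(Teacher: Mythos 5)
Your proof is correct but takes a genuinely different route from the paper's. The paper proves Lemma~\ref{quat.lem} indirectly: it computes the Clifford algebra of each of the three degree-$4$ algebras with orthogonal involution using Tao's theorem on Clifford algebras of decomposable involutions, finds that all three Clifford algebras are isomorphic to $(c,d_\alpha)_F\times(c,d_\beta)_F$, and concludes by the classification of degree-$4$ orthogonal involutions by their Clifford algebra (\cite[(15.7)]{KMRT}). You instead give an explicit internal decomposition: you exhibit a $\binv\otimes\binv$-stable copy of $(Q_\gamma,\rho_\gamma)$ inside $(Q_\alpha,\binv)\otimes(Q_\beta,\binv)$ (via $I=1\otimes i_\beta$, $J=j_\alpha\otimes j_\beta$), compute its centralizer and the restricted involution, and identify that centralizer with $\Ad_{\pform{d_\alpha}}$ using the discriminant classification of orthogonal involutions on quaternion algebras (\cite[(7.4)]{KMRT}). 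Your computations check out: the commutation relations, the fact that $\binv\otimes\binv$ restricts to an involution of discriminant $c$ on $F\langle I,J\rangle$, the spanning set $\{1,\,j_\alpha\otimes 1,\,i_\alpha\otimes i_\beta,\,i_\alpha j_\alpha\otimes i_\beta\}$ for the centralizer, and the discriminant $d_\alpha$ of the restricted involution there; your observation that the alternative embedding $I=i_\alpha\otimes 1$ yields the $d_\beta$ version is also correct and explains the double isomorphism cleanly. The trade-off: your argument is more elementary and self-contained (no Clifford algebra machinery, only the double centralizer theorem and the discriminant classification for quaternion algebras), at the cost of more hands-on verification; the paper's proof is shorter once Tao's computation and the $\mathsf{D}_2$-classification are accepted as black boxes, and it scales more readily to the trialitarian bookkeeping used in Lemma~\ref{tria.lem}.
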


\begin{proof}
  By Tao's computation of the Clifford algebra of a decomposable
  involution \cite{Tao:94} we have
  \begin{align*}
    C\bigl((Q_\alpha,\binv)\otimes(Q_\beta,\binv)\bigr)& \simeq
    Q_\alpha\times Q_\beta\simeq (c,d_\alpha)_F\times (c,d_\beta)_F,\\
    C\bigl(\Ad_{\pform{d_\alpha}}\otimes(Q_\gamma,\rho_\gamma)\bigr)&
    \simeq (c,d_\alpha)_F\times(c,d_\alpha d_\gamma)_F\simeq
    (c,d_\alpha)_F\times(c,d_\beta)_F,\\
    C\bigl(\Ad_{\pform{d_\beta}}\otimes(Q_\gamma,\rho_\gamma)\bigr)&
    \simeq (c,d_\beta)_F\times(c,d_\beta d_\gamma)_F\simeq
    (c,d_\beta)_F\times(c,d_\alpha)_F.
  \end{align*}
  Since central simple algebras with orthogonal involutions of
  degree~4 are classified by their Clifford algebra (see
  \cite[(15.7)]{KMRT}), the lemma follows.
\end{proof}

Now, for $\alpha=1$, $2$, $3$, let $(A_\alpha,\sigma_\alpha)$ be a
central simple $F$-algebra with orthogonal involution of degree~8 such
that for $\beta$, $\gamma$ with $\{\alpha,\beta,\gamma\}=\{1,2,3\}$,
\[
(A_\alpha,\sigma_\alpha)\simeq\Ad_{\qform{1,-1,1,-d_\beta}}\otimes
(Q_\alpha,\rho_\alpha) \simeq \Ad_{\qform{1,-1,1,-d_\gamma}}\otimes
(Q_\alpha,\rho_\alpha).
\]
Thus, $(A_\alpha,\sigma_\alpha)$ is Witt-equivalent to
$(Q_\beta,\binv)\otimes (Q_\gamma,\binv)$ by Lemma~\ref{quat.lem}.

\begin{lem}
  \label{tria.lem}
  The triple $\bigl((A_1,\sigma_1), (A_2,\sigma_2), (A_3,
  \sigma_3)\bigr)$ is trialitarian, in the sense that for
  $\{\alpha,\beta,\gamma\}=\{1,2,3\}$ we have
  \[
  C(A_\alpha,\sigma_\alpha)\simeq (A_\beta,\sigma_\beta) \otimes
  (A_\gamma,\sigma_\gamma).
  \]
  (See \cite[p.~548]{KMRT}.)
\end{lem}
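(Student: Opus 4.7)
My plan is to compute the Clifford algebra $C(A_\alpha, \sigma_\alpha)$ directly and match its two components with $(A_\beta, \sigma_\beta)$ and $(A_\gamma, \sigma_\gamma)$.

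First I would verify that $\disc \sigma_\alpha = 1$. The formula for the discriminant of a tensor product of orthogonal involutions, applied to $\sigma_\alpha = \ad_{\qform{1,-1,1,-d_\beta}} \otimes \rho_\alpha$, gives $\disc \sigma_\alpha = d_\beta^{2} \cdot c^{4} = 1$ in $F^\times/F^{\times 2}$. Thus $C(A_\alpha, \sigma_\alpha)$ decomposes as $C_+(A_\alpha, \sigma_\alpha) \times C_-(A_\alpha, \sigma_\alpha)$, each component being a central simple $F$-algebra of degree $8$ equipped with the involution induced by the canonical involution on $C(A_\alpha,\sigma_\alpha)$.

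Next I would apply Tao's computation of the Clifford algebra of a tensor product of orthogonal involutions (\cite{Tao:94}, as already used in the proof of Lemma~\ref{quat.lem}) to the decomposition $(A_\alpha, \sigma_\alpha) = \Ad_{\qform{1,-1,1,-d_\beta}} \otimes (Q_\alpha, \rho_\alpha)$. The expected outcome is $\{[C_+(A_\alpha, \sigma_\alpha)], [C_-(A_\alpha, \sigma_\alpha)]\} = \{[Q_\beta], [Q_\gamma]\}$, consistent with the general relation $[C_+] + [C_-] = [A_\alpha] = [Q_\alpha] = [Q_\beta] + [Q_\gamma]$. Since $A_\beta$ and $A_\gamma$ have degree $8$ and are Brauer-equivalent to $Q_\beta$ and $Q_\gamma$ respectively, Wedderburn theory then yields $F$-algebra isomorphisms $C_+(A_\alpha, \sigma_\alpha) \simeq A_\beta$ and $C_-(A_\alpha, \sigma_\alpha) \simeq A_\gamma$ up to relabeling.

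The main obstacle will be matching the involutions, not merely the underlying algebras. To handle this I would exploit the symmetry recorded in Lemma~\ref{quat.lem}: the two presentations of $(A_\alpha, \sigma_\alpha)$ related by $d_\beta \leftrightarrow d_\gamma$ force the Tao-style output in the previous step to be symmetric in $\beta \leftrightarrow \gamma$, and reading off the induced involution on the component Brauer-equivalent to $Q_\beta$ should yield precisely the presentation $\Ad_{\qform{1,-1,1,-d_\gamma}} \otimes (Q_\beta, \rho_\beta)$ defining $\sigma_\beta$. Should this bookkeeping prove delicate, a fallback is to extend scalars to split one of the $Q_i$: this makes $(A_\alpha, \sigma_\alpha)$ totally decomposable by \cite[(42.11)]{KMRT}, so Proposition~\ref{totdec.prop} pins down the Clifford components via triality, after which a descent argument in the spirit of the proof of Theorem~\ref{deg4symp.thm} should conclude over $F$.
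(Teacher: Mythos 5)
Your strategy correctly isolates the two steps and, crucially, recognizes that identifying the underlying algebras of the Clifford components is routine (your Brauer-class/Wedderburn argument is fine) while matching the involutions is the real content. But your handling of that second step has a genuine gap, and it is precisely the part the paper's proof is designed to resolve.

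Applying Tao's computation to $(A_\alpha,\sigma_\alpha) = \Ad_{\qform{1,-1,1,-d_\beta}}\otimes(Q_\alpha,\rho_\alpha)$ hands you the Brauer classes of $C_\pm$, but in degree~$8$ this no longer pins down the involutions (unlike degree~$4$, where Lemma~\ref{quat.lem} could close the argument via \cite[(15.7)]{KMRT}). Your proposal to ``exploit the symmetry in $d_\beta \leftrightarrow d_\gamma$ and read off the induced involution'' is not a computation: the two presentations of $(A_\alpha,\sigma_\alpha)$ being isomorphic gives no information about which involution appears on a Clifford component, and there is no mechanism in your outline that produces the isomorphism $(C_+(A_\alpha,\sigma_\alpha),\sigma_+)\simeq(A_\beta,\sigma_\beta)$ at the level of algebras with involution. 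The fallback via scalar extension has the same problem in a different guise: after splitting one $Q_i$ you do get a totally decomposable algebra and can invoke Proposition~\ref{totdec.prop}, but the descent back to $F$ of an isomorphism of algebras with involution is not automatic and no argument is indicated; the cited proof of Theorem~\ref{deg4symp.thm} does not contain a descent step of this kind.

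What the paper actually does is replace the tensor decomposition by a different structural decomposition: since $\qform{1,-1,1,-d_\beta}$ is the orthogonal sum of a hyperbolic plane and $\pform{d_\beta}$, one writes
\[
(A_1,\sigma_1)\simeq \bigl((M_2(F),\binv)\otimes(Q_1,\binv)\bigr)\boxplus\bigl((Q_2,\binv)\otimes(Q_3,\binv)\bigr),
\]
using Lemma~\ref{quat.lem} to recognize the second summand. Each summand is a degree-$4$ totally decomposable algebra with known Clifford algebra \emph{with involution} by \cite[(15.12)]{KMRT}, and then Garibaldi's Orthogonal Sum Lemma \cite[Lemma~3.2]{G:clif} computes the Clifford algebra of the orthogonal sum, with its canonical involution, as the pair of ``crossed'' orthogonal sums. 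This is the concrete device that tracks involutions, and it is the ingredient your outline is missing.
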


\begin{proof}
  By triality, it suffices to prove the isomorphism for $\alpha=1$,
  $\beta=2$, and $\gamma=3$. By definition, $(A_1,\sigma_1)$ is an
  orthogonal sum of the algebra $M_2(Q_1)$ with a hyperbolic
  involution and of $\Ad_{\pform{d_2}}\otimes(Q_1,\rho_1)$, so by
  Lemma~\ref{quat.lem}
  \[
  (A_1,\sigma_1)\simeq \bigl((M_2(F),\binv)\otimes (Q_1,\binv)\bigr)
  \boxplus\bigl((Q_2,\binv)\otimes(Q_3,\binv)\bigr).
  \]
  By \cite[(15.12)]{KMRT} we have
  \[
  C\bigl((M_2(F),\binv)\otimes (Q_1,\binv)\bigr)\simeq (M_2(F),\binv)
  \times (Q_1,\binv)
  \]
  and
  \[
  C\bigl((Q_2,\binv)\otimes(Q_3,\binv)\bigr)\simeq (Q_2,\binv)\times
  (Q_3,\binv).
  \]
  Arguing as in Garibaldi's ``Orthogonal Sum Lemma''
  \cite[Lemma~3.2]{G:clif}, we get
  \[
  (C(A_1,\sigma_1),\underline{\sigma_1})\simeq
  (C_+(A_1,\sigma_1),\sigma_+) \times (C_-(A_1,\sigma_1),\sigma_-)
  \]
  with
  \[
  (C_+(A_1,\sigma_1),\sigma_+)\simeq
  \bigl((M_2(F),\binv)\otimes(Q_2,\binv)\bigr) \boxplus
  \bigl((Q_1,\binv)\otimes(Q_3,\binv)\bigr)
  \]
  and
  \[
  (C_-(A_1,\sigma_1),\sigma_-)\simeq
  \bigl((M_2(F),\binv)\otimes(Q_3,\binv)\bigr) \boxplus
  \bigl((Q_1,\binv)\otimes(Q_2,\binv)\bigr).
  \]
  Thus, $(C_+(A_1,\sigma_1),\sigma_+)$ is Witt-equivalent to
  $(Q_1,\binv)\otimes(Q_3,\binv)$, hence it is isomorphic to
  $(A_2,\sigma_2)$. Likewise, $(C_-(A_1,\sigma_1),\sigma_-)$ is
  isomorphic to $(A_3,\sigma_3)$.
\end{proof}

\begin{thm}
  \label{deg8.thm}
  Let $(A,\sigma)$ be a central simple $F$-algebra with orthogonal
  involution of degree~$8$. Assume $(A,\sigma)$ is not totally
  decomposable. Then $(A,\sigma)_{F_Q}$ is hyperbolic if and only if
  there is a quaternion $F$-algebra $Q'$ with the following
  properties:
  \begin{itemize}
  \item[--]
  $\ind(Q\otimes_FQ')\leq2$, and
  \item[--]
  $(A,\sigma)$ is Witt-equivalent to $(Q,\binv)\otimes(Q',\binv)$.
  \end{itemize}
  When these equivalent properties hold, we can find $c$, $d$, $d'\in
  F^\times$ such that
  \[
  Q\simeq(c,d)_F,\qquad Q'\simeq(c,d')_F,
  \]
  and
  \[
  (A,\sigma)\simeq\Ad_{\qform{1,-1,1,-d}}\otimes(Q'',\rho'')
  \]
  where $Q''=(c,dd')_F$ and $\rho''$ is an orthogonal involution on
  $Q''$ with $\disc\rho''=c$.
\end{thm}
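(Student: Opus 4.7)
The plan is to prove the two directions of the equivalence separately, then read off the normal form from the structure of the anisotropic kernel.

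For the ``if'' direction, $Q$ splits over $F_Q$, so $(Q,\binv)_{F_Q}$ is hyperbolic, hence $(Q,\binv)\otimes(Q',\binv)_{F_Q}$ is hyperbolic, and Witt-equivalence transfers this to $(A,\sigma)_{F_Q}$ (the index condition is not needed). For the ``only if'' direction, Proposition~\ref{orthog.prop} gives $\disc\sigma=1$, and the non-totally-decomposable hypothesis together with~\cite[(42.11)]{KMRT} places us in Case~2; without loss of generality $[C_+(A,\sigma)]=[Q]$ and $[C_-(A,\sigma)]=[A\otimes_FQ]$. The core step is to show that the anisotropic kernel $(A_0,\sigma_0)$ of $(A,\sigma)$ has degree~$4$. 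Granting this, $(A_0,\sigma_0)$ is an anisotropic orthogonal involution of degree~$4$ that becomes hyperbolic over $F_Q$, so by the Corollary to Proposition~\ref{deg4orth.prop} it contains $\Qc$, whence $(A_0,\sigma_0)\simeq(Q,\binv)\otimes(Q',\binv)$ for a quaternion $F$-algebra~$Q'$. Writing $(A,\sigma)=\ad_h$ on $V$ over a division algebra $D$, a Witt index of at least~$2$ in degree~$8$ requires $\dim_DV\geq 4$, hence $\ind A\leq 2$; since $[A]=[A_0]=[Q\otimes_FQ']$, this gives the index bound.

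The main obstacle is establishing the Witt-index lower bound $\geq 2$ (equivalently, excluding anisotropic kernels of degree~$6$ or~$8$). My plan is to combine triality for the $D_4$-triple $\bigl((A,\sigma),(C_+(A,\sigma),\sigma_+),(C_-(A,\sigma),\sigma_-)\bigr)$ from~\cite[(42.3)]{KMRT} with the Dejaiffe--Parimala--Sridharan--Suresh injectivity of $W^-\Qc\to W(F_Q)$ (already used in~\S\ref{split.sec}). Writing $(C_+(A,\sigma),\sigma_+)=(M_4(Q),\ad_{h_+})$ for a skew-hermitian form $h_+$ over $\Qc$, the hyperbolicity of $(A,\sigma)_{F_Q}$ propagates through triality to a constraint on the image of $h_+$ in $W(F_Q)$; the injectivity then descends this to $F$, and a second application of triality converts the resulting rank bound on $h_+$ into the required Witt-index bound for $(A,\sigma)$.

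Once $Q'$ is produced with $[Q\otimes_FQ']=[A]$ of index $\leq 2$, the Albert--Pfister common-slot lemma~\cite[(16.30)]{KMRT} provides a common slot $c$ for $Q$, $Q'$, and a quaternion $Q''$ representing $[A]$, so $Q=(c,d)_F$, $Q'=(c,d')_F$ and $Q''=(c,dd')_F$. Lemma~\ref{quat.lem} identifies $(Q,\binv)\otimes(Q',\binv)\simeq\Ad_{\pform{d}}\otimes(Q'',\rho'')$ with $\disc\rho''=c$, and adjoining the hyperbolic complement of the same type yields the claimed $(A,\sigma)\simeq\Ad_{\qform{1,-1,1,-d}}\otimes(Q'',\rho'')$.
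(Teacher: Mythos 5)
Your high-level strategy matches the paper's: reduce to analyzing $(C_+(A,\sigma),\sigma_+)$, which is Brauer-equivalent to $Q$ since the non-totally-decomposable hypothesis rules out a split component, use the Dejaiffe/Parimala--Sridharan--Suresh result and Garibaldi's theorem to pin down its Witt index, then translate back to $(A,\sigma)$ and finish with the Albert--Pfister common-slot lemma and Lemma~\ref{quat.lem}. That is the route the paper takes.

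The weak point is the sentence ``a second application of triality converts the resulting rank bound on $h_+$ into the required Witt-index bound for $(A,\sigma)$.'' This is precisely where the paper does real work. Triality for a $D_4$-triple does \emph{not} transparently identify the Witt index of one vertex from that of another; one cannot simply carry a Witt decomposition of $(C_+(A,\sigma),\sigma_+)$ across the triality diagram. The paper resolves this by first recording that the anisotropic kernel of $(C_+(A,\sigma),\sigma_+)$ is a degree-$4$ orthogonal involution with trivial discriminant, hence isomorphic to $(Q',\binv)\otimes(Q'',\binv)$, and then proving Lemma~\ref{quat.lem} and Lemma~\ref{tria.lem}, which compute the trialitarian triple of $\Ad_{\qform{1,-1,1,-d_\beta}}\otimes(Q_\alpha,\rho_\alpha)$ explicitly (via Tao's Clifford-algebra formula and the Orthogonal Sum Lemma). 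Only from that explicit identification does one read off that $(A,\sigma)\simeq\Ad_{\qform{1,-1,1,-d}}\otimes(Q',\rho')$ and hence that its anisotropic kernel has degree $4$. Your plan needs to supply this computation or an equivalent argument; as written it assumes the conclusion of Lemma~\ref{tria.lem} without proof. A secondary point: the constraint obtained from hyperbolicity of $(A,\sigma)_{F_Q}$ is isotropy (not a Witt-class condition) on $q_+=(h_+)_{F_Q}$, so what is actually invoked from Dejaiffe/PSS is preservation of anisotropy for skew-hermitian forms over $\Qc$ under extension to $F_Q$, not merely injectivity on Witt groups; these are close but not identical statements, and the paper's phrasing of the key step in the proof of Theorem~\ref{deg8.thm} relies on the anisotropy version. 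Finally, the paper disposes of the case $A$ split separately via Proposition~\ref{split.prop} rather than running it through the Clifford-algebra argument; your plan does not flag this, though your argument does cover it once one checks that $C_+(A,\sigma)$ is still Brauer-equivalent to $Q$ when $A$ is split.
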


\begin{proof}
  Clearly, $(A,\sigma)_{F_Q}$ is hyperbolic if $(A,\sigma)$ is
  Witt-equivalent to an algebra containing $\Qc$. Conversely, suppose
  $(A,\sigma)_{F_Q}$ is hyperbolic. Since $(A,\sigma)$ is not totally
  decomposable, it is not hyperbolic. If $A$ is split,
  Proposition~\ref{split.prop} shows that the anisotropic kernel of
  $(A,\sigma)$ is $\Ad_{n_Q}\simeq\Qc\otimes\Qc$, hence
  \[
  (A,\sigma)\simeq\Ad_{\qform{1,-1,1,-a}}\otimes\Ad_{\pform{b}}.
  \]
  For the rest of the proof, we may thus assume $A$ is not split. By
  Proposition~\ref{orthog.prop}, one of the components of the Clifford
  algebra, $C_+(A,\sigma)$ say, is split by $F_Q$. However,
  $C_+(A,\sigma)$ is not split since $(A,\sigma)$ is not totally
  decomposable, hence $C_+(A,\sigma)$ is Brauer-equivalent to $Q$. As
  was observed in Proposition~\ref{orthog.prop},
  $(C_+(A,\sigma),\sigma_+)_{F_Q}$ is isotropic. If it is hyperbolic,
  then $(A,\sigma)$ is hyperbolic by the main theorem of \cite{G:clif}, a
  contradiction. Therefore, the anisotropic kernel of
  $(C_+(A,\sigma),\sigma_+)$ has degree~$4$. It has discriminant~$1$
  by triality, hence $(C_+(A,\sigma),\sigma_+)$ is Witt-equivalent to
  a product $(Q',\binv)\otimes(Q'',\binv)$ for some quaternion
  $F$-algebras $Q'$, $Q''$ such that $Q'\otimes Q''$ is
  Brauer-equivalent to $Q$. We may therefore find $c$, $d$, $d'$,
  $d''\in F^\times$ such that
  \[
  Q\simeq(c,d)_F,\qquad Q'\simeq(c,d')_F,\qquad Q''\simeq(c,d'')_F.
  \]
  Letting $\rho$ (resp.\ $\rho'$, resp.\ $\rho''$) be an orthogonal
  involution on $Q$ (resp.\ $Q'$, resp.\ $Q''$) with discriminant $c$,
  we have by Lemma~\ref{quat.lem}
  \[
  (C_+(A,\sigma),\sigma_+)\simeq \Ad_{\qform{1,-1,1,-d'}}\otimes
  (Q,\rho)\simeq \Ad_{\qform{1,-1,1,-d''}}\otimes(Q,\rho).
  \]
  By Lemma~\ref{tria.lem}, it follows that $(A,\sigma)$ is isomorphic
  to
  \[
  \Ad_{\qform{1,-1,1,-d}}\otimes(Q',\rho') \qquad\text{or}\qquad
  \Ad_{\qform{1,-1,1,-d}}\otimes(Q'',\rho''),
  \]
  hence it is Witt-equivalent to
  \[
  \Qc\otimes(Q',\binv) \qquad\text{or}\qquad
  \Qc\otimes(Q'',\binv).
  \]
  Interchanging $Q'$ and $Q''$ if necessary, we thus obtain the stated
  description of $(A,\sigma)$.
\end{proof}

\section{Examples of arbitrarily large degree}
\label{large.sec}

Let $(A,\sigma)$ be a central simple $F$-algebra with involution of
orthogonal or symplectic type. Consider the (iterated) Laurent series
fields $F_1=F((x))$, $F_2=F((x))((y))$, and the quaternion
$F_2$-algebra $H=(x,y)_{F_2}$. Let $\rho$ be any involution of
orthogonal or symplectic type on $H$, and let
\[
(A_1,\sigma_1)=(A,\sigma)\otimes_F\Ad_{\pform{x}},\qquad
(A_2,\sigma_2)=(A,\sigma)\otimes_F (H,\rho).
\]
If $(A,\sigma)_{F_Q}$ is hyperbolic, then $(A_1,\sigma_1)_{F_Q}$ and
$(A_2,\sigma_2)_{F_Q}$ also are hyperbolic, since they contain a
hyperbolic factor.

\begin{thm}
  \label{large.thm}
  Assume $(A,\sigma)$ is anisotropic. Then $(A_1,\sigma_1)$ and
  $(A_2,\sigma_2)$ are anisotropic. Moreover, the following conditions
  are equivalent:
  \begin{enumerate}
  \item[(i)]
  $(A,\sigma)$ contains $\Qc$;
  \item[(ii)]
  $(A_1,\sigma_1)$ contains $\Qc$;
  \item[(iii)]
  $(A_2,\sigma_2)$ contains $\Qc$.
  \end{enumerate}
\end{thm}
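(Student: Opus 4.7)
The plan is to combine a parity-of-valuation argument in the $x$-adic (resp.\ $y$-adic) valuation with the characterization in the first proposition of the paper: $(A,\sigma)$ contains $\Qc$ iff $A[t]$ contains an element $y$ with $\sigma(y)=-y$ and $y^2 = at^2+b$.

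For anisotropy of $(A_1,\sigma_1)$, identify $A_1 = M_2(A((x)))$ with $\sigma_1$ acting on a matrix with entries $p,q,r,s$ by sending it to the matrix with entries $\sigma(p), -x\sigma(r), -x^{-1}\sigma(q), \sigma(s)$. If $\sigma_1(z)z=0$ for $z \neq 0$, the $(1,1)$-entry reads $\sigma(p)p = x\sigma(r)r$; since $v_x(\sigma(p)p)$ is even and $v_x(x\sigma(r)r)$ is odd, parity forces both sides to vanish, and anisotropy of $\sigma$ (propagated to $A((x))$ via a leading-coefficient argument) gives $p = r = 0$. The $(2,2)$-entry similarly yields $s = q = 0$, contradicting $z \neq 0$. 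Anisotropy of $(A_2,\sigma_2)$ follows by the same strategy applied in the quaternion basis $\{1,i_H,j_H,i_Hj_H\}$ of $H$: expanding $\sigma_2(z)z = 0$ yields component relations whose terms split cleanly by $v_y$-parity, and two passes of the parity argument (first $v_y$, then $v_x$) show all coefficients of $z$ vanish.

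The implications (i)$\Rightarrow$(ii) and (i)$\Rightarrow$(iii) are tautological, since any embedding $\Qc\subset(A,\sigma)$ over $F$ tensors up to $\Qc_{F_i}\subset(A_i,\sigma_i)$. For (ii)$\Rightarrow$(i), the first proposition applied to $(A_1,\sigma_1)$ yields $y\in A_1[t] = M_2(A((x))[t])$ with $\sigma_1(y)=-y$ and $y^2 = at^2+b$. Writing the entries of $y$ as $p,q,r,s\in A((x))[t]$, the skew condition forces $q = x\sigma(r)$ and $\sigma(p) = -p$, and the $(1,1)$-entry of $y^2 = (at^2+b)I$ becomes $p^2 + x\sigma(r)r = at^2+b$. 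The $v_x$-parity argument (even for $p^2$, odd for $x\sigma(r)r$), together with anisotropy of $\sigma$ (which rules out $\alpha^2=0$ with $\sigma(\alpha)=-\alpha$: this would give $\sigma(\alpha)\alpha=0$, hence $\alpha=0$), gives $v_x(p) = 0$ and $v_x(r) \geq 0$. Specialization at $x = 0$ produces $p_0\in A[t]$ with $\sigma(p_0) = -p_0$ and $p_0^2 = at^2+b$, whence (i).

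For (iii)$\Rightarrow$(i), first take $\rho = \binv$. Expand $z\in A_2[t]$ in the quaternion basis as $z = a_0 + a_1 i_H + a_2 j_H + a_3 i_H j_H$ with $a_k\in A((x))((y))[t]$; the skew condition gives $\sigma(a_0) = -a_0$ and $\sigma(a_k) = a_k$ for $k=1,2,3$, and the equation $z^2 = at^2+b$ forces the $1$-component relation $a_0^2 + xa_1^2 + ya_2^2 - xya_3^2 = at^2+b$. Parity in $v_y$ (even from $a_0^2$ and $xa_1^2$; odd from $ya_2^2$ and $-xya_3^2$), together with the anti-cancellation argument, gives $v_y(a_k) \geq 0$; specializing at $y = 0$ reduces to $\bar a_0^2 + x\bar a_1^2 = at^2+b$ in $A((x))[t]$, and a second parity analysis in $v_x$ yields $p_0\in A[t]$ satisfying the requirements of the first proposition. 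For orthogonal $\rho = \Int(u)\circ\binv$, the same argument applies after absorbing $u$ into the choice of basis; the parities of the resulting diagonal relation are unaffected, and $a_0$ remains $\sigma$-skew. The main obstacle is the cancellation step in the valuation arguments --- that relations like $\alpha^2 + x\beta^2 = 0$ cannot hide the true valuations --- which is handled uniformly by the anisotropy of $(A,\sigma)$ and its stable propagation to the Laurent-series extensions of $A$.
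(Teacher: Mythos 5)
Your proof is correct, but it takes a genuinely different route from the paper's. The paper equips $A_1$ (resp.\ $A_2$) with a half-integer-valued value function extending the $x$-adic (resp.\ $(x,y)$-adic) valuation, checks it is a $v$-gauge in the sense of Tignol--Wadsworth, and then invokes their machinery: since $\sigma_i$ preserves the gauge and the induced involution on the degree-$0$ part of the associated graded ring $\gr(A_i)_0\simeq A\times A$ (resp.\ $\simeq A$) is anisotropic, $\sigma_i$ is anisotropic and satisfies $g_i(\sigma_i(s)s)=2g_i(s)$. The copy of $\Qc$ is then located by taking the quaternion generators $i,j\in A_i$, noting that the gauge formula forces $g_i(i)=g_i(j)=0$, and projecting their images in $\gr(A_i)_0$ onto a copy of $A$. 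You bypass all of this with a direct Springer-type parity analysis on the entries of $\sigma_i(z)z$ and, for the containment direction, by routing through the paper's first proposition (a copy of $\Qc$ is equivalent to an element $y\in A_i[t]$ with $\sigma_i(y)=-y$ and $y^2=at^2+b$) and then specializing the leading Laurent coefficient at $x=0$ (or $y=0$ then $x=0$). Both proofs exploit the same underlying structure --- that the $x$- and $y$-parities of the diagonal components of a $\sigma_i$-adjoint product are rigid --- but your version is self-contained and does not depend on the gauge theory references, at the cost of being more computational: you must track matrix entries and the two-pass valuation bookkeeping for $A_2$ by hand, and you must separately verify (as you do) that the leading-coefficient squares $\alpha^2=\pm\sigma(\alpha)\alpha$ stay nonzero via anisotropy. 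The paper's approach is more conceptual and makes the residue algebra $A\times A$ visible at once. One small note: in the anisotropy argument for $(A_2,\sigma_2)$ you should state explicitly, as you did for the containment direction, that it suffices to examine the $1$-component of $\sigma_2(z)z$ and that the relevant relation is $\sigma(a_0)a_0 \mp x\sigma(a_1)a_1 \mp y\sigma(a_2)a_2 \pm xy\sigma(a_3)a_3=0$ with signs depending on $\rho$; the parities are unaffected, as you observe.
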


\begin{proof}
  Let $\xi_1=\bigl(
  \begin{smallmatrix}
    1&0\\0&-1
  \end{smallmatrix}\bigr)$, $\eta_1=\bigl(
  \begin{smallmatrix}
    0&x\\1&0
  \end{smallmatrix}\bigr)\in M_2(F_1)$, so 
  \[
  \ad_{\pform{x}}(\xi_1)=\xi_1 \qquad\text{and}\qquad
  \ad_{\pform{x}}(\eta_1)=-\eta_1.
  \]
  Let also $(a_i)_{i\in I}$ be an $F$-basis of $A$, so $(a_i\otimes1,
  a_i\otimes\xi_1, a_i\otimes\eta_1, a_i\otimes\xi_1\eta_1)_{i\in I}$
  is an $F_1$-basis of $A_1$. We extend the $x$-adic valuation $v_1$
  on $F_1$ to a map
  \[
  g_1\colon
  A_1\to\bigl({\textstyle\frac12}\mathbb{Z}\bigr)\cup\{\infty\}
  \]
  defined by
  \[
  g_1\bigl(\sum_{i\in I} a_i\otimes(\alpha_i+\beta_i\xi_1
  +\gamma_i\eta_1+\delta_i\xi_1\eta_1)\bigr) =\min_{i\in I}\bigl(
  v_1(\alpha_i), v_1(\beta_i), v_1(\gamma_i)+{\textstyle\frac12},
  v_1(\delta_i)+{\textstyle\frac12}\bigr)
  \]
  for $\alpha_i$, $\beta_i$, $\gamma_i$, $\delta_i\in F_1$. It is
  readily verified that the map $g_1$ satisfies the following
  conditions for $s$, $t\in A_1$ and $\alpha\in F_1$:
  \begin{itemize}
  \item
  $g_1(1)=0$ and $g_1(s)=\infty$ if and only if $s=0$;
  \item
  $g_1(s+t)\geq\min\bigl(g_1(s), g_1(t)\bigr)$ and
  $g_1(s\alpha)=g_1(s)+v_1(\alpha)$;
  \item
  $g_1(st)\geq g_1(s)+g_1(t)$.
  \end{itemize}
  (It suffices to prove the last inequality for $s$, $t$ in the above
  $F_1$-base of $A_1$, see~\cite[Lemma~1.2]{TW1}.) The map $g_1$ defines a
  filtration of $A_1$, and the associated graded ring $\gr(A_1)$ is
  \[
  \gr(A_1)\simeq A\otimes_F M_2(F[x,x^{-1}])
  \]
  with the grading defined by
  \begin{align*}
    \gr(A_1)_\lambda & = A\otimes
    \begin{pmatrix}
    x^\lambda&0\\0&x^\lambda
    \end{pmatrix}
    &&\text{for $\lambda\in\mathbb{Z}$},\\
    \gr(A_1)_\lambda & = A\otimes
    \begin{pmatrix}
    0&x^{\lambda+\frac12}\\ x^{\lambda-\frac12}&0
    \end{pmatrix}
    &&\text{for $\lambda\in\bigl({\textstyle\frac12}\mathbb{Z}\bigr)
      \setminus\mathbb{Z}$.} 
  \end{align*}
  Therefore, $\gr(A_1)$ is a graded simple algebra, and $g_1$ is a
  $v_1$-gauge in the sense of \cite{TW1}. The involution $\sigma_1$
  preserves $g_1$. On $\gr(A_1)_0$, the induced involution
  $\widetilde{\sigma_1}$ is $\sigma\otimes\Id$, hence it is
  anisotropic. Therefore, $\sigma_1$ is anisotropic by \cite[Cor.~2.3]{TW2},
  $g_1$ is the unique $v_1$-gauge that is preserved by $\sigma_1$ by
  \cite[Th.~2.2]{TW2}, and we have
  \begin{equation}
    \label{eq:4}
    g_1(\sigma_1(s)s)=2g_1(s)\qquad\text{for all $s\in A_1$}.
  \end{equation}
  Now, suppose $(A_1,\sigma_1)$ contains $\Qc$; it then contains
  elements $i$, $j$ such that
  \begin{equation}
    \label{eq:5}
    i^2=a,\quad j^2=b,\quad ji=-ij,\quad\sigma_1(i)=-i,\quad
    \sigma_1(j)=-j. 
  \end{equation}
  Then by \eqref{eq:4} we have $g_1(i)=\frac12g_1(-a)=0$ and,
  similarly, $g_1(j)=0$. The images $\widetilde{i}$, $\widetilde{j}$ of
  $i$, $j$ in $\gr_1(A_1)_0$ satisfy conditions similar
  to~\eqref{eq:5}. Since $\gr(A_1)_0\simeq A\times A$ we may consider
  a projection $\gr(A)_0\to A$, which is a homomorphism of algebras
  with involution $\pi\colon(\gr(A_1)_0,\widetilde{\sigma_1})\to
  (A,\sigma)$. The images $\pi(\widetilde{i})$, $\pi(\widetilde{j})$
  generate a copy of $\Qc$ in $(A,\sigma)$. Thus, $(A,\sigma)$
  contains $\Qc$ if $(A_1,\sigma_1)$ contains $\Qc$. The converse is
  clear.

  The argument for $(A_2,\sigma_2)$ follows the same lines. Let
  $\xi_2$, $\eta_2\in H$ be such that
  \[
  \xi_2^2=x,\qquad \eta_2^2=y,\qquad\eta_2\xi_2=-\xi_2\eta_2.
  \]
  Note that if $\rho$ is orthogonal its discriminant is represented by
  the quadratic form $\qform{x,y,-xy}$, hence it is the square class
  of $x$, $y$, or $-xy$. Therefore, we may assume
  $\rho=\Int(\xi_2)\circ\binv$, $\Int(\eta_2)\circ\binv$, or
  $\Int(\xi_2\eta_2)\circ\binv$. In each case (and also if $\rho$ is
  symplectic) we have $\rho(\xi_2)=\pm\xi_2$ and
  $\rho(\eta_2)=\pm\eta_2$.

  Let $v_2\colon F_2\to \mathbb{Z}^2\cup\{\infty\}$ be the
  $(x,y)$-adic valuation such that $v_2(x^\lambda
  y^\mu)=(\lambda,\mu)$ for $\lambda$, $\mu\in \mathbb{Z}$, where
  $\mathbb{Z}^2$ is endowed with the right-to-left lexicographic
  ordering. Considering again an $F$-basis $(a_i)_{i\in I}$ of $A$, we
  extend $v_2$ to a map
  \[
  g_2\colon
  A_2\to\bigl({\textstyle\frac12}\mathbb{Z}\bigr)^2\cup\{\infty\}
  \]
  defined by
  \begin{multline*}
  g_2\bigl(\sum_{i\in I} a_i\otimes(\alpha_i+\beta_i\xi_2
  +\gamma_i\eta_2+\delta_i\xi_2\eta_2)\bigr) =\\ \min_{i\in
    I}\bigl(v_2(\alpha_i), v_2(\beta_i)+({\textstyle\frac12},0),
  v_2(\gamma_i)+(0,{\textstyle\frac12}),
  v_2(\delta_i)+({\textstyle\frac12,\frac12})\bigr)
  \end{multline*}
  for $\alpha_i$, $\beta_i$, $\gamma_i$, $\delta_i\in F_2$. The map
  $g_2$ is a $v_2$-gauge on $A_2$ with associated graded ring
  \[
  \gr(A_2)=A\otimes(x,y)_{F[x,x^{-1},y,y^{-1}]}.
  \]
  The involution $\sigma_2$ preserves $g_2$ and the induced involution
  $\widetilde{\sigma_2}$ on $\gr(A_2)_0=A$ is $\sigma$. Therefore, the
  same arguments as for $(A_1,\sigma_1)$ show that $(A_2,\sigma_2)$ is
  anisotropic, and that $(A_2,\sigma_2)$ contains $\Qc$ if and only if
  $(A,\sigma)$ contains $\Qc$.
\end{proof}

Theorem~\ref{large.thm} applies in particular to the division algebra
with orthogonal involution $(A,\sigma)$ of Theorem~\ref{totdec.thm},
and yields central simple algebras with anisotropic involution
$(A_1,\sigma_1)$ and $(A_2,\sigma_2)$ of degree~$16$  that do not
contain $\Qc$, even though they are hyperbolic over $F_Q$. The
involution $\sigma_1$ is orthogonal and $\ind A_1=8$, while the
involution $\sigma_2$ may be of orthogonal or symplectic type and
$A_2$ is division. Of course, these constructions can be iterated to
obtain examples of algebras with anisotropic involution of arbitrarily
large degree that become hyperbolic over $F_Q$ and do not contain
$\Qc$. Such examples can also be derived from the central simple
algebras of degree~$4$ with symplectic involution in case~(b) of
Theorem~\ref{deg4symp.thm}, although no division algebra can be
obtained in this way since the algebras in case~(b) of
Theorem~\ref{deg4symp.thm} have index~$2$.

\providecommand{\bysame}{\leavevmode\hbox to3em{\hrulefill}\thinspace}

\end{document}